\def\ps@pprintTitle{%
 \let\@oddhead\@empty
 \let\@evenhead\@empty
 \def\@oddfoot{}%
 \let\@evenfoot\@oddfoot}
\newtheorem{theorem}{Theorem}
\newtheorem{lemma}{Lemma}
\newtheorem{corollary}{Corollary}
\newtheorem{remark}{Remark}
\theoremstyle{nonumberplain}
\theoremstyle{nonumberplain}
\newtheorem{proof}{Proof:}
\newcommand{\R}{\mathbb{R}}
\newcommand{\mC}{\mathcal{C}}
\DeclareMathOperator\supp{supp}
\begin{document}

\begin{frontmatter}

\title{The Range Description of a Conical Radon Transform}

\author[main_address]{Weston Baines\corref{corresponding_author}}
\cortext[corresponding_author]{Corresponding author}\ead{bainesw1@tamu.edu}
\address[main_address]{Department of Mathematics
Mailstop 3368
Texas A\&M University
College Station, TX 77843-3368}

\begin{abstract}
In this work we consider the Conical Radon Transform, which integrates a function on $\R^n$ over families of circular cones. Transforms of this type are known to arise naturally as models of Compton camera imaging and single-scattering optical tomography (in the latter case, when $n=2$). The main results (which depend on the parity of $n$) provide a description of the range of the transform on the space $C_0^\infty(\R^n)$
\end{abstract}

\begin{keyword}
Radon Transform \sep Cone Transform \sep Optical Tomography \sep Compton Camera Imaging \sep Wave Equation
\end{keyword}

\end{frontmatter}

\nocite{*}

\section{Introduction}\label{S:intro}
In his seminal 1923 paper, Arthur Compton derived a physical model for the phenomenon by which X-rays scatter via interaction with charged particles \citep{Compton_1923}. This phenomenon has come to be known as \emph{Compton scattering}. The scattering interaction between a high-energy photon and an electron is modeled by the equation
\begin{equation} \label{eq: Compton_Scattering}
    E_{\gamma'} = \frac{E_\gamma}{1+(E_\gamma/(m_e c^2))(1-\cos \theta)},
\end{equation}
(see Fig. \ref{fig: Compton_Scattering}) where $E_\gamma$ is the initial energy of the photon, $E_{\gamma'}$ is the energy of the photon after scattering, $m_e$ is the rest mass of an electron, $c$ is the speed of light, and $\theta$ is the scattering angle.
\begin{figure}[ht]
    \centering
    \includegraphics[width=0.7\textwidth]{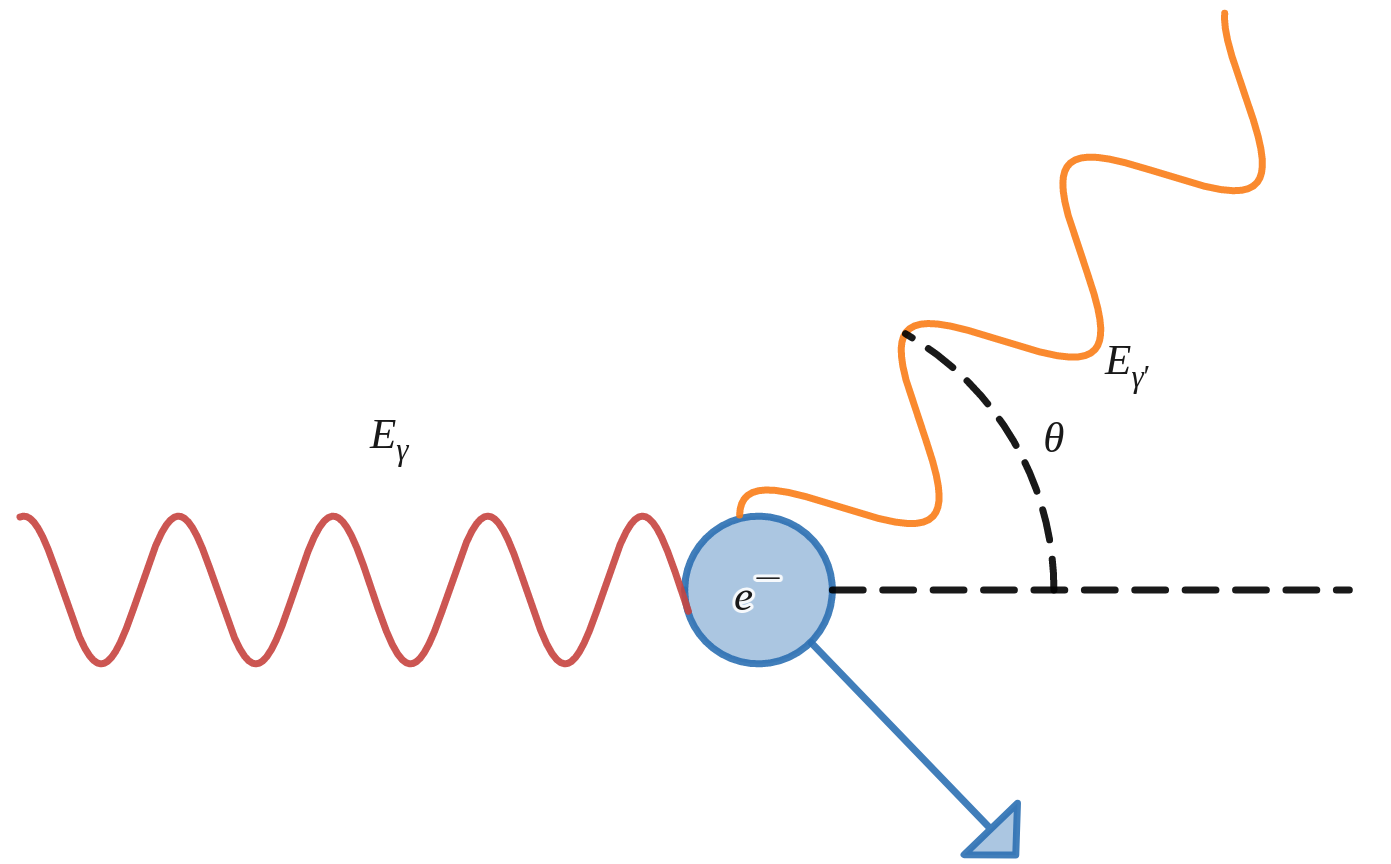}
    \caption{Incident X-ray scatters off electron at angle $\theta$ via Compton Scattering, transferring $E_{\gamma}-E_{\gamma'}$ energy to the electron.}
    \label{fig: Compton_Scattering}
\end{figure}

\begin{figure}[ht!]
    \centering
    \includegraphics[width=0.7\textwidth]{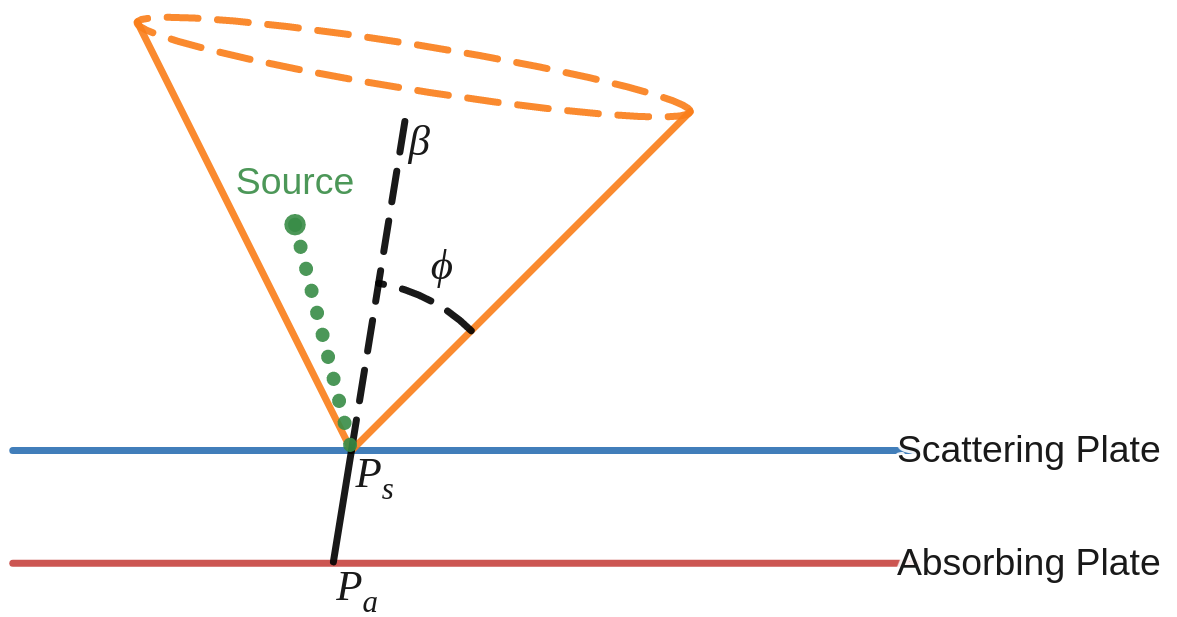}
    \caption{Schematic representation of a Compton camera.}
    \label{fig: Compton_Camera}
\end{figure}

The recently popularized Compton camera utilize Compton scattering to detect high energy photons. A Compton cameras consists of a pair of parallel detector plates (See Fig. \ref{fig: Compton_Camera}). When a high energy photon hits the first plate, it undergoes Compton scattering and the scattered photon is absorbed in the second plate. The positions $\boldsymbol P_s$ and $\boldsymbol P_a$ of the scattering and absorption events, as well as the deposited energies $E_s$ and $E_a$ are recorded. This data allows one to determine a surface cone
\begin{equation}
    \mathfrak{C}(\boldsymbol P_s, \phi, \boldsymbol \beta)=\{ \boldsymbol y \in \mathbb{R}^n : (\boldsymbol y - \boldsymbol P_s) \cdot \boldsymbol \beta = |\boldsymbol y -\boldsymbol P_s| \cos \phi \}.
\end{equation}
containing the incident photon trajectory (see Fig. \ref{fig: Compton_Camera}). This cone has vertex $\boldsymbol P_s$, opening (scattering) angle $\phi$ and central axis direction $\boldsymbol \beta$ determined by
\begin{equation}
    \cos \phi = 1- \frac{m_e c^2 E_s}{(E_s+E_a)E_a} \qquad \boldsymbol \beta = \frac{\boldsymbol P_s-\boldsymbol P_a}{|\boldsymbol P_s-\boldsymbol P_a|}.
\end{equation}

In this manuscript we will assume that high energy $\gamma$-photons are emitted from a radiating source with intensity source distribution $f$ that is smooth and compactly supported ($f\in C_{0}^{\infty}(\mathbb{R}^n)$). By exposing the Compton camera to $\gamma$-photons for a sufficiently long duration of time and counting the number of photons measured at each position $\boldsymbol x$ through a scattering angle $\phi$ and with central cone axis $\boldsymbol \beta$ we can approximate the integral of $f$ over surface cones:
\begin{equation} \label{eq: CRT}
    \mathcal{C}[f](\boldsymbol x, \phi, \boldsymbol \beta) = \int_{\mathfrak{C}(\boldsymbol x, \phi, \boldsymbol \beta)} f(\boldsymbol y)  dS(\boldsymbol y),
\end{equation}
where $dS$ is the surface measure of the cone.

This is a Radon type transform, as it projects functions onto a parametric family of hypersurfaces. For this reason many works refer to this as the \emph{Conical Radon Transform} (CRT) \citep{GZ,Ambartsoumian2013,Palamodov2017}, as will we in this manuscript. Depending on the specific engineering of the Compton camera, the measure $dS$ may include additional weight terms, typically a power weight $|x-y|^{-k}$ \citep{Basko,Maxim,Smith2005}. We denote this \emph{Weighted Conical Radon Transform} as $\mathcal{C}^{k}$. When $k<n-1$ the transform $\mathcal{C}^{k}$ is called \emph{regular} and when $k=n-1$, it is called \emph{singular} \citep{Palamodov2017}. 

The main advantage of Compton cameras over other traditional $\gamma$-photon detectors, such as Anger cameras, is that they give directional information without any need for collimation. In order for an Anger camera to detect trajectories of $\gamma$-photons, a mechanical filter must be placed in front of the detector to block all photons except those traveling along a particular path \citep{Peterson} (see also Fig. \ref{fig: Collimation}). Although this gives very precise directional information, the signal is significantly attenuated, making it unsuitable in cases of weak signals and strong background noise.

\begin{figure}[ht!]
    \centering
    \includegraphics[width=0.7\textwidth]{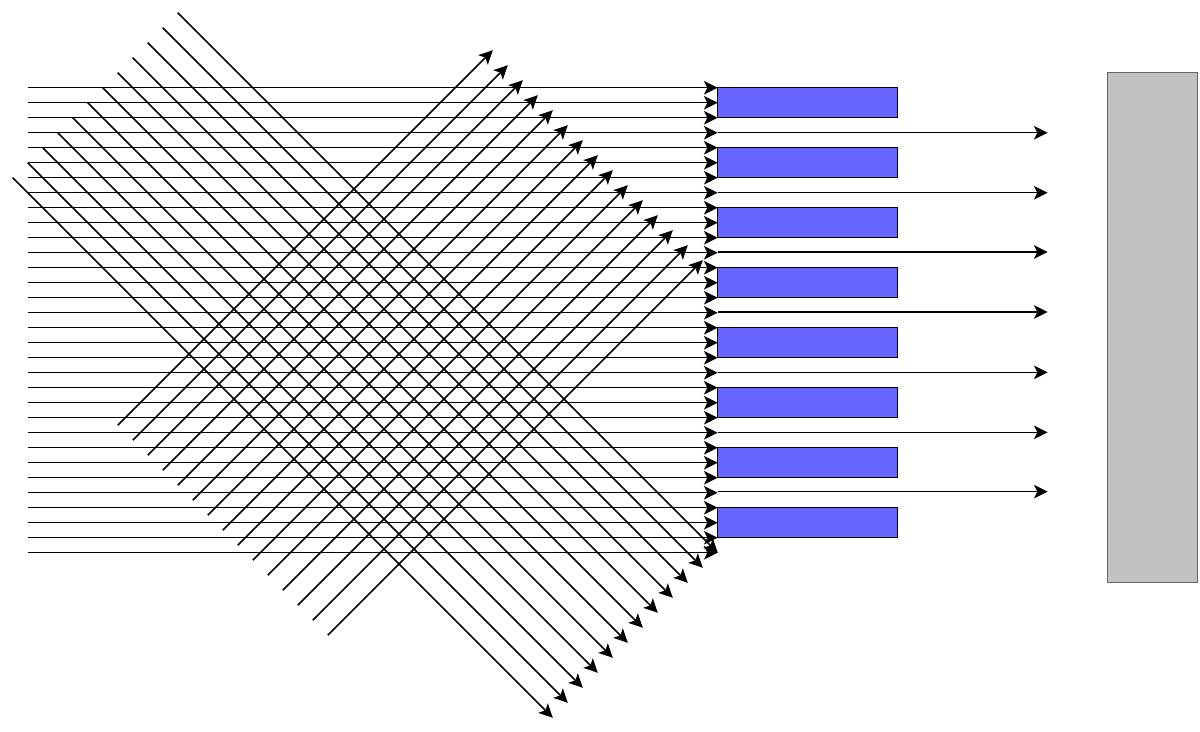}
    \caption{Schematic representation of mechanical collimation in an Anger camera.}
    \label{fig: Collimation}
\end{figure}

An important application where traditional Anger cameras are not feasible is detecting the presence of illicit nuclear material \citep{ADHKK, cargo}. In such settings the nuclear source is shielded and very weak, and the background noise is very strong, with a signal-to-noise (SNR) ratio significantly less than 1\%. The difficulty is further compounded by the presence of complex configurations of scattering and absorbing materials.

The primary goal of Compton imaging is to recover $f$ from $\mathcal{C}[f]$, that is, inversion of the CRT. Inversion of Radon-type transforms is a rich area of study in inverse problems, where one must address questions of existence, uniqueness and stability of solution \citep{Kuchment,Natt}. The CRT is particularly interesting since the hypersurfaces under consideration (surface cones) have a singularity, and the transform is overdetermined (the CRT maps a function of $n$ variables to a function of $2n$ variables). The CRT, being overdetermined, has ``small'' range - in the sense that it has infinite co-dimension - thus, there are infinitely many left inverses, and in fact a variety of inversion formulae have been studied (see \citep{TKK} and the references therein). 

An important topic in tomographic studies of Radon-type transforms is the description of their ranges \citep{Kuchment,Natt}, since they aid in improving inversion algorithms, completing incomplete data, correcting measurement errors, measuring sampling errors, etc. \citep{Natt,Fatma}. 



\begin{figure}[ht!]
    \centering
    \includegraphics[width=0.6\textwidth]{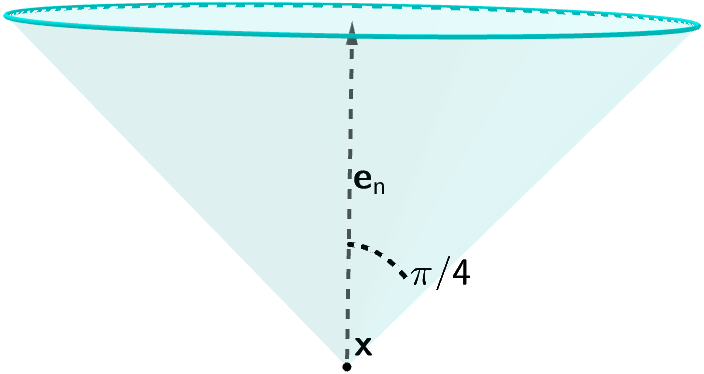}
    \caption{3D cone with central axis aligned along the ``vertical'' coordinate and $\pi/2$ radians opening.}
    \label{fig: Ex_Axis_Aligned_Cone}
\end{figure}



This manuscript is structured as follows. Section \ref{S:notation} contains the main notions and notations involved. In Section \ref{S:results} we describe the range of the CRT under the restriction of fixed cone axis on the space $C_0^\infty(\R^n)$ of smooth functions with compact support. We find that the description differs for even and odd dimensions, which one might expect, as the wave equation is relevant in the study of the CRT (see e.g. the discussion in Sec. 5 of \citep{Palamodov2017}). Proofs of these results are given in Section \ref{S:proofs}. These results are formulated and stated, to avoid complicating notations, for cones with the opening angle $\pi/2$ radians (half-opening angle $\pi/4$ radians). They, however, can be readily formulated for any opening angle, which is done in Section \ref{S:angle}. Since even restricted CRT data is sufficient for inversion, one might expect that studying symmetries of the CRT will reveal the full range description. Indeed, we show that this is possible in Section \ref{S: Full_Range}.

\section{Some Objects and Notations}\label{S:notation}
Throughout the following sections, we will denote vectors with a bold font, e.g. $\boldsymbol x \in \mathbb{R}^{n-1}$ and $\boldsymbol \omega \in \mathbb{C}^{n-1}$. For a complex number $z$ we denote its real and imaginary parts with $\Re{(z)}$ and $\Im{(z)}$ respectively.


When we discuss the restricted CRT, we will write $\R^n=\R^{n-1}_{\boldsymbol x}\times\R_t$, and thus vectors are represented as $(\boldsymbol x,t)$. We will assume that the axes of all cones are aligned with the $t$-direction, i.e. 
$$\boldsymbol \beta=\boldsymbol e_n=(\textbf{0},1) \in \R^{n-1}_{\boldsymbol x} \times \R_t.$$
For a fixed opening angle $\phi$ will denote the restricted CRT with
\begin{equation}
    \mathcal{C}_{\phi}[f] (\boldsymbol x, t) = \mathcal{C}[f]((\boldsymbol x,t),\phi,\boldsymbol e_n).
\end{equation}
In this case we can identify the CRT of a function as its convolution with the distribution
\begin{equation}
    D_\phi(\boldsymbol x,t) = \delta(-t-|(\cot \phi) \boldsymbol x|) 
\end{equation}
where $\delta$ is the Dirac delta distribution.\footnote{
Appearance of such cones suggests a possible difference in formulas of the CRT depending on the parity of the dimension. This difference does materialize. E.g., for odd dimensions inversion require non-local transformation.}

The weighted CRT will be important for our study of the range of the CRT, and under the restrictions on opening angle and axis direction, the weighted CRT of a function can be written as its convolution with the distribution
\begin{equation}
    D_{w,\phi} (\boldsymbol x, t,\phi) = w(\boldsymbol x,t,\phi) \delta(-t-|(\cot \phi) \boldsymbol x|)
\end{equation}
where $w(\boldsymbol x,t,\phi)$ is a given weight. While a variety of weight functions can arise, in this work we will only need the power weight $w(\boldsymbol x,t,\phi)=|((\cot \phi) \boldsymbol x,t)|^{-1}$. We will denote the weighted CRT with power weight 
\begin{equation}
    D_{\phi}^{1} \coloneqq D_{|((\cot \phi) \boldsymbol x,t)|^{-1}} = |((\cot \phi) \boldsymbol x,t)|^{-1}\delta(-t-|(\cot \phi) \boldsymbol x|)
\end{equation}
and the corresponding weighted CRT as
\begin{equation}
    \mathcal{C}_{\phi}^{1}[f] = D_{\phi}^{1} \ast f.
\end{equation}
For the special case discussed in Sections \ref{S:results} and \ref{S:proofs} we will frequently omit the subscript $\phi$ in the notation as we fix $\phi=\pi/4$:
\begin{equation*}
    \mathcal{C}[f] (\boldsymbol x, t) = \mathcal{C}[f]((\boldsymbol x,t),\pi/4,\boldsymbol e_n), \qquad D(\boldsymbol x,t) = \delta(-t-|\boldsymbol x|).
\end{equation*}
\begin{equation*}
    \mathcal{C}^{1}[f] (\boldsymbol x, t) = |(\boldsymbol x,t)|^{-1}\delta(-t-|\boldsymbol x|) = D^{1} \ast f.
\end{equation*}
We also introduce the standard d'Alembertian operator
\begin{equation}
    \square  \coloneqq \frac{\partial^2}{\partial t^2} - \Delta_{\boldsymbol x},
\end{equation}
where $\Delta_{\boldsymbol x}$ is the Laplacian with respect to the spatial variable $\boldsymbol x$.

The Heaviside function will be denoted as 
$$h(t) \coloneqq \Bigg\{\begin{array}{cc}
           1, & t\geq 0 \\
          0 & t<0 
         \end{array}
$$
and
\begin{equation}
    \Theta(\boldsymbol x,t): = h(t) \cdot \delta (\boldsymbol x).
\end{equation}
The distribution $\Theta$ is supported on a ray, and if we convolve $\Theta$ with a function $f$ we obtain
\begin{equation} \label{eq: t_integral}
    \Theta \ast f(\boldsymbol x, t) = \int_{-\infty}^{t}f(\boldsymbol x,z)dz.
\end{equation}

We will use $\mathbbm{1}_{\Omega}$ to denote the indicator function of a domain $\Omega$:
\begin{equation}
    \mathbbm{1}_{\Omega}(\boldsymbol x) = \begin{cases}
    1 \quad &\text{if} \, \boldsymbol x \in \Omega \\
          0 \quad &\text{if} \, \boldsymbol x \notin \Omega. \\
    \end{cases}
\end{equation}

Let $T(\boldsymbol x,t)$ be a tempered distribution. When $T$ is supported in a half-space
\begin{equation}\label{eq:Ht}
H_{t_0} \coloneqq \{(\boldsymbol{x},t)|\,t\leq t_0\},
\end{equation}
its Fourier transform (which is a tempered distribution itself), 
\begin{equation}
    \hat{T}(\boldsymbol \omega, \sigma) = \mathcal{F}[T] (\boldsymbol \omega, \sigma) = T(e^{-i(\boldsymbol x \cdot \boldsymbol \omega + \sigma t)}),
\end{equation}
has analytic extension to $\Im(\sigma) > 0$ (See \citep{Bremmerman, Sharyn1999, Strichartz}).

We thus can simplify our considerations by working with the Fourier transform on the open set $\mathbb{H}_+=\{(\boldsymbol \omega, \sigma):\Im(\sigma) > 0\}$ and taking the limit $\Im(\sigma) \searrow0$ when needed. 
This applies to the distributions $D$ and $D^{\prime}$ introduced before. In particular (see, e.g.\citep{Stein}), $\widehat{D}$ on $\mathbb{H}_+$ can be computed as follows:
\begin{align*}
    \widehat{D}(\boldsymbol \omega,\sigma)
    &= 2^{-\frac{1}{2}} \int_{-\infty}^{0} \int_{\partial B(0,1)} \left ( e^{i r \boldsymbol \omega \cdot \boldsymbol \theta -i r \sigma) } \right ) d\boldsymbol \theta (-r)^{n-2} dr \\
    &= 2^{-\frac{1}{2}} \int_{\mathbb{R}^{n-1}} \left ( e^{i \sigma |\boldsymbol u|} e^{-i \boldsymbol \omega \cdot \boldsymbol u} \right ) d \boldsymbol u \\
    &=\alpha_n \frac{i\sigma}{(\sigma^2-|\boldsymbol \omega|^2)^{\frac{n}{2}}},
\end{align*}
where $d  \boldsymbol\theta$ is the surface measure on the sphere, $\boldsymbol u \in \mathbb{R}^{n-1}$, and
\begin{equation} \label{eq: alpha}
    \alpha_n  \coloneqq -(-1)^{-\frac{n}{2}} 2^{\frac{2n-1}{2}} \pi^{\frac{n-2}{2}} \Gamma \left ( \frac{n}{2} \right).
\end{equation}

Using the relation $\sqrt{2}tD^{1} = D$, we conclude that on $\mathbb{H}_+$ one has
\begin{equation} \label{eq: FFT_D'}
    \widehat{D^{1}}(\boldsymbol \omega,\sigma) = \frac{\beta_n}{(\sigma^2-|\boldsymbol \omega|^2)^{\frac{n-2}{2}}},
\end{equation}
where 
\begin{equation} \label{eq: beta}
    \beta_n  \coloneqq -(-1)^{-\frac{n}{2}} \frac{2^{n-1}}{2-n} \pi^{\frac{n-2}{2}} \Gamma \left ( \frac{n}{2} \right).
\end{equation}
Finally, one last identity that will be needed in Section \ref{S:proofs} for $\widehat{\Theta \ast f}$ when $f \in C_{0}^{\infty}(\mathbb{R}^n)$ and 
\begin{equation} \label{eq: vanishing_vertical_line_integral}
\int_{-\infty}^{\infty} f(\boldsymbol x, t)dt = 0 \qquad \forall \boldsymbol x \in \mathbb{R}^{n-1}.    
\end{equation}
In this case we compute
\begin{align} \label{eq: FourierVerticalLineIdentity}
    \widehat{\Theta \ast f} &= \int_{\mathbb{R}^{n-1}} \int_{-\infty}^{\infty} \int_{-\infty}^{t} f(\boldsymbol x, s)ds e^{-i \sigma t}dt e^{-i \boldsymbol \omega \cdot \boldsymbol x} d \boldsymbol x \nonumber \\
    &= \int_{\mathbb{R}^{n-1}} -\frac{1}{i \sigma}  \int_{-\infty}^{t} f(\boldsymbol x, s)ds e^{-i \sigma t}|_{t \rightarrow -\infty}^{t \rightarrow \infty} + \frac{1}{i \sigma} \int_{-\infty}^{\infty} f(\boldsymbol x, t) e^{-i \sigma t} dt e^{-i \boldsymbol \omega \cdot \boldsymbol x} d \boldsymbol x \nonumber \\
    &=\frac{1}{i \sigma} \int_{\mathbb{R}^{n-1}} f(\boldsymbol x, t) e^{-i \boldsymbol \omega \cdot \boldsymbol x-i \sigma t} d\boldsymbol x dt \nonumber \\
    &= \frac{1}{i \sigma} \hat{f} (\boldsymbol \omega, \sigma).
\end{align}
This identity holds for all $\sigma \in \mathbb{C} \backslash \{0\}$, and has analytic continuation to $\sigma = 0$. To see this observe that an equivalent expression for (\ref{eq: FourierVerticalLineIdentity}) is
\begin{equation}
    \int_{\mathbb{R}^{n-1}} \frac{1}{i \sigma} \int_{-\infty}^{\infty} f(\boldsymbol x, t) e^{-i \sigma t} dt e^{-i \boldsymbol \omega \cdot \boldsymbol x} d\boldsymbol x.
\end{equation}
Since $f$ is compactly supported $\hat{f}$ is entire according to the Paley-Wiener Theorem, and $f$ satisfies (\ref{eq: vanishing_vertical_line_integral}), therefore the inner integral vanishes when $\sigma=0$ and thus has power series representation with no zeroth order term:
\begin{align*}
    \int_{\mathbb{R}^{n-1}} \frac{1}{i \sigma} \int_{-\infty}^{\infty} f(\boldsymbol x, t) e^{-i \sigma t} dt e^{-i \boldsymbol \omega \cdot \boldsymbol x} d\boldsymbol x = \int_{\mathbb{R}^{n-1}} \frac{1}{i \sigma} \sum_{j=1}^{\infty} \xi_{j}(\boldsymbol x) \sigma^j e^{-i \boldsymbol \omega \cdot \boldsymbol x} d\boldsymbol x \\
    = -i \int_{\mathbb{R}^{n-1}} \sum_{j=1}^{\infty} \xi_{j}(\boldsymbol x) \sigma^{j-1} e^{-i \boldsymbol \omega \cdot \boldsymbol x} d\boldsymbol x \overset{\sigma \rightarrow 0}{\rightarrow} -i \int_{\mathbb{R}^{n-1}} \xi_{1}(\boldsymbol x) e^{-i \boldsymbol \omega \cdot \boldsymbol x} d\boldsymbol x.
\end{align*}
Each term $\xi_j$ is compactly supported, thus the above expression analytic.
\section{Range description for restricted CRT}\label{S:results}
As indicated in the Introduction, our first goal is to characterize the range of the restricted CRT $\mathcal{C}$ as a map from $C_{0}^{\infty}(\mathbb{R}^n)$ to $C^{\infty}(\mathbb{R}^n)$. As one might have expected, the answers differ in odd and even dimensions.
\begin{theorem}\label{thm: even} Let $n=2k$ be even (and thus $\boldsymbol x\in\mathbb{R}^{2k-1}$).
A function $g \in C^{\infty}(\mathbb{R}^{2k-1} \times \mathbb{R})$ is in the range of $\mathcal{C}$ on $C_{0}^{\infty}(\mathbb{R}^{2k-1} \times \mathbb{R})$ if and only if the following conditions are satisfied:
\begin{enumerate}[(i)]
    \item $\square^k g(\boldsymbol x,t)$ has compact support.
    \item $\int_{-\infty}^{\infty} \square^k g(\boldsymbol x,t)dt=0$ for every $\boldsymbol x \in \mathbb{R}^{2k-1}$.
    \item $\supp g \subseteq H_{t_0}$ (see (\ref{eq:Ht})) for some $t_0 \in \mathbb{R}$.
\end{enumerate}
\end{theorem}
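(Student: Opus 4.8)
The plan is to reduce everything to the convolution identity $\mathcal{C}[f]=D\ast f$ and to the symbol of $\widehat D$ on $\mathbb{H}_-$ computed in Section \ref{S:notation}. The decisive algebraic fact is that the Fourier symbol of $\square^{k}$ is $(-1)^{k}(\sigma^{2}-|\boldsymbol\omega|^{2})^{k}$, and (for $n=2k$) multiplying $\widehat D(\boldsymbol\omega,\sigma)$ by it cancels the denominator of $\widehat D$, leaving $\widehat{\square^{k}D}$ equal to a nonzero constant $c_n$ times $i\sigma$ on $\mathbb{H}_-$. Since $\square^{k}D$ is supported in $\{(\boldsymbol x,t):t=|\boldsymbol x|\}\subseteq H_{0}$, Lemma \ref{L:PW} together with the boundary–limit statement identifies it on all of $\mathbb{R}^{n}$ and gives the operator identity $\square^{k}D=c_{n}\,\partial_{t}\delta$, i.e.
\begin{equation*}
 \square^{k}\mathcal{C}[f]=c_{n}\,\partial_{t}f\qquad\text{for all }f\in C_{0}^{\infty}(\mathbb{R}^{n}),\qquad c_n\neq 0.
\end{equation*}
(In odd dimensions the analogous cancellation would require a fractional power of $\square$, which is the source of the difference between the two theorems.) The rest of the argument is organized around this relation.

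Necessity is then almost immediate. For $g=\mathcal{C}[f]$ with $f\in C_{0}^{\infty}$, the displayed identity shows $\square^{k}g=c_{n}\partial_{t}f$ has compact support, which is (1); integrating in $t$ and using that $f$ vanishes for $|t|$ large gives $\int_{\mathbb{R}}\square^{k}g\,dt=c_{n}\int\partial_{t}f\,dt=0$, which is (2); and $\supp g\subseteq\supp D+\supp f$ (valid since $\supp f$ is compact), with $\supp D\subseteq H_{0}$, so $g$ is supported in $H_{t_{0}}$ for a suitable $t_{0}$, which is (3).

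For sufficiency, given $g\in C^{\infty}$ satisfying (1)–(3), I would propose the preimage
\begin{equation*}
 f:=\tfrac1{c_{n}}\bigl(\Theta\ast\square^{k}g\bigr)=\frac1{c_{n}}\int_{-\infty}^{t}\square^{k}g(\boldsymbol x,z)\,dz .
\end{equation*}
Then $f\in C^{\infty}$, and $f$ has compact support: $\square^{k}g$ being compactly supported makes $f$ vanish for $\boldsymbol x$ outside a ball and for $t$ below that support, while condition (2) makes $f$ vanish for $t$ above it. By construction $\partial_{t}f=\tfrac1{c_{n}}\square^{k}g$, so by the displayed identity $\square^{k}\mathcal{C}[f]=c_{n}\partial_{t}f=\square^{k}g$, i.e. $w:=\mathcal{C}[f]-g$ satisfies $\square^{k}w=0$. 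Because $f\in C_{0}^{\infty}$, the necessity argument shows $\mathcal{C}[f]$ is supported in a half-space, and $g$ is supported in $H_{t_{0}}$ by (3), so $w$ is supported in some $H_{t_{1}}$. By Lemma \ref{L:PW}, $\hat w$ is analytic on the connected set $\mathbb{H}_-$ and there $(\sigma^{2}-|\boldsymbol\omega|^{2})^{k}\hat w\equiv0$; since $(\sigma^{2}-|\boldsymbol\omega|^{2})^{k}$ is a nonzero polynomial, $\hat w\equiv0$ on $\mathbb{H}_-$, and the boundary limit in Lemma \ref{L:PW} yields $w=0$. Hence $\mathcal{C}[f]=g$ and $g$ lies in the range.

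I expect two points to need care. The minor one is computational and bookkeeping: pinning down $\widehat D$ precisely (so that $c_{n}\neq0$ and $\square^{k}D=c_{n}\partial_{t}\delta$), and justifying the manipulations of convolutions of distributions that are not compactly supported — all legitimate here because $f$ is compactly supported. The real obstacle is the uniqueness step: $\square^{k}w=0$ alone does not force $w=0$, since traveling–wave solutions exist. It is precisely the half-space support hypothesis (3), fed into the Paley–Wiener Lemma \ref{L:PW}, that excludes those; getting this interplay right — and recognizing that (3) is genuinely needed and not implied by (1)–(2) — is the heart of the proof.
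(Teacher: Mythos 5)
Your overall scheme coincides with the paper's up to the last step: the symbol identity $\square^{k}D=c_{n}\partial_{t}\delta$ with $c_{n}=(-1)^{k}\alpha_{2k-1}\neq0$, hence $\square^{k}\mathcal{C}[f]=c_{n}\partial_{t}f$, the necessity argument, and the preimage $f=c_{n}^{-1}\Theta\ast\square^{k}g$ are exactly what the paper does. The deviation is in how you identify $\mathcal{C}[f]$ with $g$, and there the proof has a genuine gap. To conclude that $w:=\mathcal{C}[f]-g$ vanishes you apply Lemma \ref{L:PW} to $w$, but that lemma is a statement about \emph{tempered} distributions supported in a half-space. The term $\mathcal{C}[f]$ is harmless (it is bounded since $f\in C_{0}^{\infty}$), but for $g$ you know only that it is smooth, supported in $H_{t_0}$, and satisfies (1)--(2); nothing in these hypotheses, as they stand, controls its growth, so $\hat g$ (hence $\hat w$) need not even be defined, let alone analytic on $\mathbb{H}_-$. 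You cannot repair this within the Fourier framework, because any Paley--Wiener-type uniqueness already presupposes temperateness --- that is precisely the circularity the paper is organized to avoid.

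The paper closes this hole with two ingredients missing from your argument. First, Lemma \ref{lm: dAlembert_Kernel} and Corollary \ref{co: iterated_kernel}: a smooth solution of $\square^{l}w=0$ supported in a half-space is identically zero, proved by the classical uniqueness for the Cauchy problem for the wave equation (domain of dependence / energy), which needs no growth hypothesis at all. This is the uniqueness statement your $w$ actually requires. Second, Lemma \ref{lm: bounded_even}: using the half-space--supported fundamental solution $\Phi=(-1)^{k}\alpha_{2k-1}^{-1}\Theta\ast D$ of $\square^{k}$ together with that uniqueness, one gets $g=\Phi\ast\square^{k}g$, whence $g$ and its derivatives are bounded; only then is $g$ a tempered distribution and only then do the Fourier computations on $\mathbb{H}_-$ (the paper simply checks $\widehat{\mathcal{C}[f]}=\hat g$ there and passes to the boundary) become legitimate. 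So either invoke Lemma \ref{lm: dAlembert_Kernel} to kill $w$ directly, in which case you never need $\hat w$, or first establish the boundedness of $g$ as in Lemma \ref{lm: bounded_even} before appealing to Lemma \ref{L:PW}; as written, your proof assumes the temperateness it was supposed to supply. The remaining parts --- the cancellation in $\widehat D$, necessity, and the compact support of your $f$ via conditions (1)--(2) --- match the paper and are correct.
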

An analogous result for odd dimensions is as follows:
\begin{theorem}\label{thm: odd}  Let $n=2k+1$ be odd (and thus $\boldsymbol x\in\mathbb{R}^{2k}$).
A function $g \in C^{\infty}(\mathbb{R}^{2k} \times \mathbb{R})$ is in the range of $\mathcal{C}$ on $C_{0}^{\infty}(\mathbb{R}^{2k} \times \mathbb{R})$ if and only if
\begin{enumerate}[(i)]
    \item $\square^{2k} \mathcal{C}^{1}[g](\boldsymbol x,t)$ has compact support.
    \item $\int_{-\infty}^{\infty} \square^{2k} \mathcal{C}^{1}[g](\boldsymbol x,t)dt=0$ for every $\boldsymbol x \in \mathbb{R}^{2k}$.
    \item $\supp g \subseteq H_{t_0}$ for some $t_0 \in \mathbb{R}$.
\end{enumerate}
\end{theorem}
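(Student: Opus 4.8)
The plan is to work on the Fourier side over the lower half-space $\mathbb{H}_-$, where by Lemma~\ref{L:PW} every distribution supported in some $H_{t_0}$ (see (\ref{eq:Ht})) has a genuine holomorphic Fourier transform, and to invert the identity $\widehat{\mathcal{C}[f]}=\widehat{D}\,\hat f$. The only obstruction to a clean inversion is that, for $n=2k+1$ odd (so $\boldsymbol x$ lies in an even-dimensional space), the symbol $\widehat{D}$ computed above carries a genuine square root of $\sigma^2-|\boldsymbol\omega|^2$, hence cannot be inverted by a differential operator alone; I would therefore first compose with the weighted transform $\mathcal{C}'$, whose symbol $\widehat{D'}$ (see (\ref{eq: FFT_D'})) carries the matching fractional factor, so that on $\mathbb{H}_-$ the product rationalizes, $\widehat{D'}\widehat{D}=c_k\, i\sigma\,(\sigma^2-|\boldsymbol\omega|^2)^{-2k}$ for a nonzero constant $c_k$. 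Since $(\sigma^2-|\boldsymbol\omega|^2)^{2k}$ is the symbol of $\square^{2k}$ and $(i\sigma)^{-1}$ is the symbol of convolution with $\Theta$ (cf.\ (\ref{eq: t_integral})), this identity is exactly why the statement is phrased through $\mathcal{C}'[g]$, $\square^{2k}$, and a $t$-integration — and why it differs from the even case, where $\widehat{D}$ is already rational and only $\square^{k}$ applied to $g$ is needed.

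\noindent\textbf{Necessity.} Given $g=\mathcal{C}[f]=D\ast f$ with $f\in C_0^\infty(\R^n)$, I would first note that $D$, hence $D'$, is supported in the half-space $\{t\ge0\}$, so $\supp g\subseteq\supp D+\supp f$ is bounded below in $t$, giving condition 3. Then, on $\mathbb{H}_-$, $\widehat{\square^{2k}\mathcal{C}'[g]}=(\sigma^2-|\boldsymbol\omega|^2)^{2k}\,\widehat{D'}\,\widehat{D}\,\hat f=c_k\,i\sigma\,\hat f$, i.e.\ $\square^{2k}\mathcal{C}'[g]=c_k\,\partial_t f$; this lies in $C_0^\infty$ (condition 1), and its $t$-integral equals $c_k\bigl(f(\boldsymbol x,+\infty)-f(\boldsymbol x,-\infty)\bigr)=0$ (condition 2).

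\noindent\textbf{Sufficiency.} Conversely, given $g\in C^\infty$ satisfying 1--3, I would use condition 3 to see that $\mathcal{C}'[g]=D'\ast g$ is a well-defined smooth function (the supports of $D'$ and $g$ both lie in half-spaces), so that $u:=\square^{2k}\mathcal{C}'[g]\in C_0^\infty$ by condition 1. I would then define $f:=c_k^{-1}\,\Theta\ast u=c_k^{-1}\int_{-\infty}^{t}u(\boldsymbol x,z)\,dz$, which is smooth; it is compactly supported because $u$ is, so $f$ is $t$-independent outside a slab, and condition 2 forces the constant to vanish for large $t$. Hence $f\in C_0^\infty$, and on $\mathbb{H}_-$ one computes $\widehat{\mathcal{C}[f]}=\widehat{D}\,\hat f=c_k^{-1}(i\sigma)^{-1}(\sigma^2-|\boldsymbol\omega|^2)^{2k}\,\widehat{D}\,\widehat{D'}\,\hat g=\hat g$; letting $\Im(\sigma)\nearrow0$ (Lemma~\ref{L:PW}, valid since $f$ is compactly supported and $\supp g\subseteq H_{t_0}$) gives $\mathcal{C}[f]=g$.

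\noindent\textbf{The main obstacle.} The symbol algebra above is routine; the real work is in the justifications. One must make sense of $\mathcal{C}'[g]=D'\ast g$, and prove its smoothness, when neither factor is compactly supported, using only that both live in half-spaces; and one must justify multiplying Fourier transforms (legitimate on $\mathbb{H}_-$, where they are honest holomorphic functions) and descending via Lemma~\ref{L:PW}. Conceptually, the heart of the matter is that condition 2 — vanishing of the full $t$-integral of $\square^{2k}\mathcal{C}'[g]$ — is precisely the closure condition that upgrades $\Theta\ast(\text{compactly supported})$, a priori only supported in a half-space, to a compactly supported function; this is the analogue here of the moment conditions in classical Radon range theorems, and the one place where the characterization reflects the actual structure of the range rather than a formal inversion. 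I expect this, together with the careful handling of the branch of $(\sigma^2-|\boldsymbol\omega|^2)^{1/2}$ that forces $\mathcal{C}'$ into the statement, to be the crux; the constants $c_k$ are immaterial.
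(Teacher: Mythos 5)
Your route is the same as the paper's: the identity $\widehat{D'}\widehat{D}=\alpha_{2k}\beta_{2k}\,i\sigma\,(\sigma^2-|\boldsymbol\omega|^2)^{-2k}$ on $\mathbb{H}_-$, the necessity computation giving $\square^{2k}\mathcal{C}'[g]=c_k\,\partial_t f$ (hence conditions 1 and 2), the support argument for condition 3, and the sufficiency step of defining $f=c_k^{-1}\int_{-\infty}^{t}\square^{2k}\mathcal{C}'[g]\,dz$, using condition 2 to get compact support, and checking $\widehat{\mathcal{C}[f]}=\hat g$ before letting $\Im(\sigma)\nearrow 0$, are all exactly what the paper does. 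Your reading of the role of condition 2 (it closes up the half-space-supported primitive into a compactly supported one) is also the paper's.

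The genuine gap is the item you yourself flag as ``the main obstacle'' and then leave unproven. In the sufficiency direction $g$ is only assumed smooth with $\supp g\subseteq H_{t_0}$; such a function need not be a tempered distribution, so neither $\hat g$ nor $\widehat{\mathcal{C}'[g]}$ is defined, Lemma~\ref{L:PW} does not apply, and the product formulas $\widehat{\mathcal{C}'[g]}=\widehat{D'}\hat g$ and $\widehat{\mathcal{C}[f]}=\widehat D\hat f$ (convolutions of two non-compactly-supported factors) cannot even be written down, let alone passed to the boundary $\Im(\sigma)\to 0^-$. Saying the justification is ``legitimate on $\mathbb{H}_-$'' presupposes exactly what has to be proved. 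The paper closes this hole with a dedicated argument: Lemma \ref{lm: dAlembert_Kernel} (uniqueness for $\square^l$ with half-space-supported data, via the wave-equation Cauchy problem) and Corollary \ref{co: iterated_kernel}, combined with the explicit half-space-supported fundamental solution $\Phi=\alpha_{2k}^{-1}\beta_{2k}^{-1}\,\Theta\ast D$ of $\square^{2k}\mathcal{C}'[\Phi]=\delta$, yield $g=\Phi\ast\bigl(\square^{2k}\mathcal{C}'[g]\bigr)$, and the volume-integral estimate (\ref{eq: bound}) then shows $g$ and its derivatives are bounded (Lemmas \ref{lm: bounded_even}, \ref{lm: bounded_odd}). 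Only after this boundedness is established can $g$ and $\mathcal{C}'[g]$ be treated as tempered distributions supported in a half-space and the Fourier computations you perform be carried out. Supplying this lemma (or an equivalent temperedness argument) is what your proposal still needs; the symbol algebra by itself does not constitute the proof.
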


As seen in these theorems, there is a close relationship between the CRT and the weighted CRT $\mathcal{C}^{1}$. In fact the range of $\mathcal{C}^{1}$ is very similar to the range of $\mathcal{C}$.

\begin{theorem} \label{thm: weighted_even} Let $n=2k$ be even (and thus $\boldsymbol x\in\mathbb{R}^{2k-1}$).
A function $g \in C^{\infty}(\mathbb{R}^{2k-1} \times \mathbb{R})$ is in the range of $\mathcal{C}^{1}$ on $C_{0}^{\infty}(\mathbb{R}^{2k-1} \times \mathbb{R})$ if and only if the following conditions are satisfied:
\begin{enumerate}[(i)]
    \item $\square^{k-1} g(\boldsymbol x,t)$ has compact support.
    \item $\supp g \subseteq H_{t_0}$ (see (\ref{eq:Ht})) for some $t_0 \in \mathbb{R}$.
\end{enumerate}
\end{theorem}

An analogous result for odd dimensions is as follows:
\begin{theorem}\label{thm: weighted_odd}  Let $n=2k+1$ be odd (and thus $\boldsymbol x\in\mathbb{R}^{2k}$).
A function $g \in C^{\infty}(\mathbb{R}^{2k} \times \mathbb{R})$ is in the range of $\mathcal{C}^{1}$ on $C_{0}^{\infty}(\mathbb{R}^{2k} \times \mathbb{R})$ if and only if
\begin{enumerate}[(i)]
    \item $\square^{2k-1} \mathcal{C}^{1}[g](\boldsymbol x,t)$ has compact support.
    \item $\supp g \subseteq H_{t_0}$ for some $t_0 \in \mathbb{R}$.
\end{enumerate}
\end{theorem}

\section{Proofs of Theorems}\label{S:proofs}
We start with the following auxiliary results:

\begin{lemma} \label{lm: dAlembert_Kernel} Let $g\in C^\infty(\R^n)$ be such that $\square^l g=0$ for some natural number $l$ and $\supp g\subset H_{t_0}$ for some $t_0 \in \R$, then $g\equiv 0$.
\end{lemma}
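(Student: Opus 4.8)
The plan is to recognize $\square^l$ as a constant-coefficient operator that is hyperbolic with respect to the $t$-direction and to invoke uniqueness for its Cauchy problem. The principal symbol of $\square^l$ is $(\sigma^2-|\boldsymbol\omega|^2)^l$; it is nonzero at the conormal direction $(\boldsymbol 0,1)$ to the hyperplanes $\{t=\mathrm{const}\}$, and for every real $\boldsymbol\omega$ the equation $(\sigma^2-|\boldsymbol\omega|^2)^l=0$ has only the real roots $\sigma=\pm|\boldsymbol\omega|$. Hence these hyperplanes are non-characteristic for $\square^l$.

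First I would record the Cauchy data of $g$ on $\{t=t_0\}$. Since $\supp g\subseteq H_{t_0}$, the function $g$ together with all of its partial derivatives vanishes on the open set $\{t<t_0\}$; because $g\in C^\infty(\R^n)$, continuity forces $\partial_t^j g(\boldsymbol x,t_0)=0$ for $j=0,1,\dots,2l-1$ (in fact for all $j$) and every $\boldsymbol x$. Thus $g$ is a smooth solution of $\square^l g=0$ with vanishing Cauchy data of order $2l$ on the non-characteristic hyperplane $\{t=t_0\}$. Applying uniqueness for the Cauchy problem of the hyperbolic operator $\square^l$ then gives $g\equiv 0$ for all $t\ge t_0$; combined with the vanishing for $t<t_0$ this yields $g\equiv 0$ on $\R^n$. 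No growth assumption on $g$ is needed, since by finite speed of propagation the value of $g$ at a point depends only on Cauchy data in a bounded backward cone, which reduces the claim to the classical uniqueness theorem (Holmgren's theorem; see, e.g., Hörmander).

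The only delicate point is this uniqueness step, and there is an elementary route that sidesteps the multiple characteristics of $\square^l$: since a differential operator does not enlarge supports, $h:=\square^{l-1}g\in C^\infty(\R^n)$ satisfies $\square h=0$ and $\supp h\subseteq\supp g\subseteq H_{t_0}$, so the case $l=1$ — ordinary uniqueness for the wave equation with zero Cauchy data — gives $h\equiv 0$, i.e. $\square^{l-1}g=0$; iterating downward in $l$ yields $g\equiv 0$. One could alternatively argue on the Fourier side using Lemma \ref{L:PW}: if $g$ is tempered, then $\widehat g$ extends analytically to $\mathbb{H}_-$, there $(\sigma^2-|\boldsymbol\omega|^2)^l\widehat g=0$, and an analytic function on $\mathbb{H}_-$ with vanishing distributional boundary value must vanish identically, forcing $\widehat g=0$ and hence $g=0$. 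This last argument, however, requires temperedness, which is why the Cauchy-problem argument is preferable in the stated generality.
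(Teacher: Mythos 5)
Your proposal is correct, and its ``elementary route'' is precisely the paper's own proof: the paper reduces to the case $l=1$ by observing that $v=\square^{l-1}g$ is smooth, annihilated by $\square$, and supported in $H_{t_0}$ (derivatives do not enlarge supports), invokes classical uniqueness for the wave equation with zero Cauchy data on a hyperplane $\{t=s\}$, $s<t_0$, and iterates downward. Your primary argument is genuinely different in emphasis: you treat $\square^l$ directly as a constant-coefficient operator hyperbolic with respect to $t$, note that $\{t=t_0\}$ is non-characteristic and that $g$ has vanishing Cauchy data there, and appeal to global Holmgren-type uniqueness together with finite propagation speed. That route is valid and has the advantage of not needing the reduction at all, but it quietly leans on the less elementary global uniqueness theorem for operators with multiple characteristics (energy methods do not apply directly to $\square^l$), which is exactly the delicacy you flag and which the paper avoids by its induction to the $l=1$ wave equation. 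Your Fourier-side sketch is correctly set aside: it would require $g$ to be tempered, which is not assumed here (that hypothesis only becomes available later, after the boundedness lemmas), and the precise mechanism there is that $(\sigma^2-|\boldsymbol\omega|^2)^l$ is nonvanishing on $\mathbb{H}_-$ for real $\boldsymbol\omega$, forcing $\hat g=0$ before taking boundary values. In short: no gaps; you give the paper's argument plus a sound, more general alternative.
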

\begin{proof}
Let $g$ satisfy the conditions of the lemma and $l=1$. Then for any $s>t_0$, $g$ solves the Cauchy problem
\begin{equation} \label{eq: Cauchy_Homogeneous}
    \left \{\begin{array}{c}
     \square g = 0 \\
     g(\boldsymbol x,s) = 0\\
      g_t(\boldsymbol x,s) = 0\\
    \end{array} \right .
\end{equation}
(See Fig. \ref{fig: WaveEqnDomain}).
\begin{figure}[ht]
    \centering
    \includegraphics[width=0.6\textwidth]{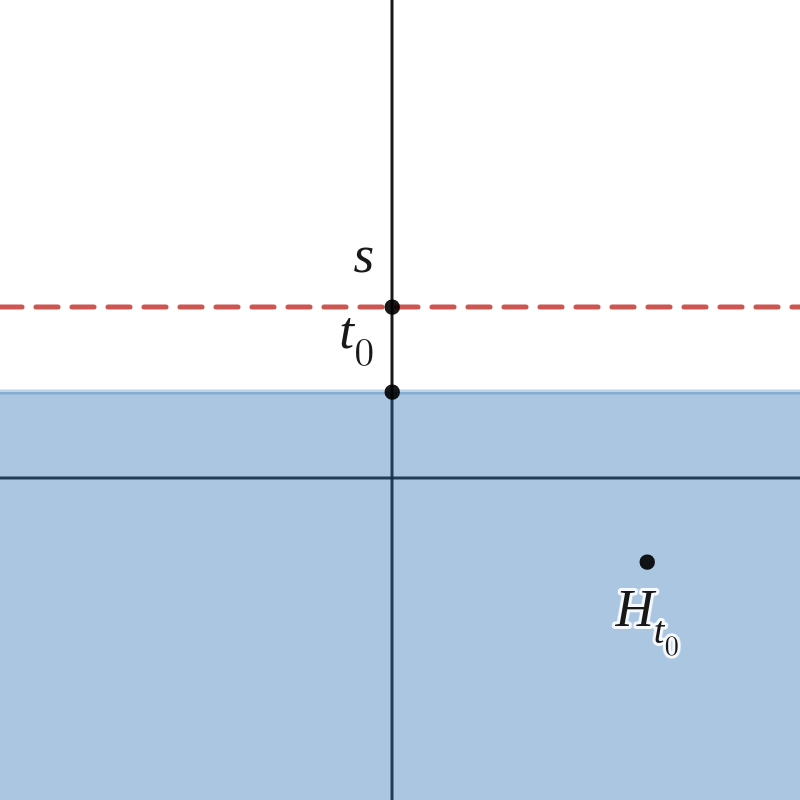}
    \caption{Domain of $g$ in Lemma \ref{lm: dAlembert_Kernel}. If $g$ vanishes outside a half-space, $g$ must vanish everywhere.}
    \label{fig: WaveEqnDomain}
\end{figure}
It is well known (see e.g. \citep{evans}) that the only solution to the Cauchy problem (\ref{eq: Cauchy_Homogeneous}) is $g \equiv 0$.

If $l>1$, then $v=(\square)^{l-1}g$ satisfies the same conditions of the lemma for $l=1$, and hence $v\equiv 0$. By induction we conclude that $g\equiv 0$.
\end{proof}
\begin{corollary} \label{co: iterated_kernel}
A smooth solution of  $\square^l g=f$ supported in $H_{t_0}$ for some $t_0\in\R$ (if such a solution exists) is unique.
\end{corollary}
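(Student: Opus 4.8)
The plan is to reduce the uniqueness claim to the homogeneous statement already proved in Lemma \ref{lm: dAlembert_Kernel}. Suppose $g_1$ and $g_2$ are both smooth solutions of $\square^l g = f$ with $\supp g_1 \subseteq H_{t_1}$ and $\supp g_2 \subseteq H_{t_2}$ for some $t_1,t_2 \in \R$ (we must allow a priori different half-space thresholds). First I would set $g := g_1 - g_2$, which is again $C^\infty$ since smooth functions form a vector space, and which satisfies $\square^l g = f - f = 0$ by linearity of $\square^l$.

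Next I would verify that $g$ is still supported in a lower half-space of the form $H_{t_0}$. Any point lying outside both $H_{t_1}$ and $H_{t_2}$ lies outside $\supp g_1$ and outside $\supp g_2$, hence outside $\supp g$; therefore $\supp g \subseteq H_{t_1} \cup H_{t_2} = H_{t_0}$ with $t_0 := \min(t_1,t_2)$. Now $g$ meets all the hypotheses of Lemma \ref{lm: dAlembert_Kernel} for this same $l$ and this $t_0$, so that lemma yields $g \equiv 0$, i.e. $g_1 \equiv g_2$, which is the asserted uniqueness.

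There is no real obstacle in this argument; the only minor point requiring attention is that the two candidate solutions are not assumed to vanish past the \emph{same} value of $t$, so one passes to the smaller threshold $\min(t_1,t_2)$ before invoking the lemma. Beyond that, the proof is just linearity of $\square^l$ together with a direct citation of the preceding lemma.
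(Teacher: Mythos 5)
Your argument is correct and is exactly the reasoning the paper leaves implicit: subtract two candidate solutions, note by linearity that the difference solves $\square^l g=0$ and is supported in $H_{\min(t_1,t_2)}$, and invoke Lemma \ref{lm: dAlembert_Kernel}. Your extra care about possibly different thresholds $t_1,t_2$ is a sensible (and correctly handled) refinement, but otherwise this matches the paper's intended proof.
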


\begin{lemma} \label{lm: bounded_even}
Let $g \in C^{\infty}(\mathbb{R}^{2k-1} \times \mathbb{R})$ satisfy the conditions of Theorem \ref{thm: even}, then $g$ and its derivatives are bounded.
\end{lemma}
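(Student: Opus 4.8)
The plan is to work on the Fourier side, where condition (1) of Theorem~\ref{thm: even} says $(\sigma^2-|\boldsymbol\omega|^2)^k \widehat g$ extends to an entire function of exponential type (Paley–Wiener), and conditions (2)–(3) control its behavior near the characteristic cone $\sigma^2=|\boldsymbol\omega|^2$. Write $F = \square^k g \in C_0^\infty$, so $\widehat F$ is entire of exponential type and rapidly decaying on real subspaces; formally $\widehat g = \widehat F / (\sigma^2-|\boldsymbol\omega|^2)^k$. The only obstruction to $\widehat g$ being a nice (bounded, smooth, decaying) function is the zero set $\{\sigma = \pm|\boldsymbol\omega|\}$ of the denominator. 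So the heart of the argument is to show that $\widehat F$ vanishes to order at least $2k$ on this set (order $k$ on each sheet $\sigma = |\boldsymbol\omega|$ and $\sigma=-|\boldsymbol\omega|$), which would make $\widehat g$ smooth there, and then to get quantitative decay so that $g$ and its derivatives — i.e. polynomial multiples of $\widehat g$ — are integrable, whence bounded by Fourier inversion.

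Here is the order in which I would carry it out. \textbf{Step 1.} Use condition (3) and Lemma~\ref{L:PW}: $\widehat g(\boldsymbol\omega,\sigma)$ is analytic in $\mathbb{H}_-$, and $\widehat F = (\sigma^2-|\boldsymbol\omega|^2)^k\widehat g$ holds there; since $\widehat F$ is entire, $\widehat g$ extends analytically across every point of the characteristic cone at which $(\sigma^2-|\boldsymbol\omega|^2)^k$ does not vanish, so the only possible singularities of $\widehat g$ lie over $\sigma=\pm|\boldsymbol\omega|$. \textbf{Step 2.} Fix $\boldsymbol\omega\neq 0$ and view things as functions of the single complex variable $\sigma$. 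Near $\sigma_0=|\boldsymbol\omega|$ (the sheet reached from $\Im\sigma<0$ matters), $(\sigma^2-|\boldsymbol\omega|^2)^k$ has a zero of order exactly $k$. Since $\widehat g$ is analytic for $\Im\sigma<0$ and, by the boundary statement in Lemma~\ref{L:PW}, has a distributional boundary value that must agree with the smooth (in fact real-analytic in $\sigma$, being a slice of an entire function) function $\widehat F(\boldsymbol\omega,\sigma)/(\sigma^2-|\boldsymbol\omega|^2)^k$ wherever the latter is regular — conclude that $\widehat F(\boldsymbol\omega,\sigma)$ must have a zero of order $\geq k$ at $\sigma=|\boldsymbol\omega|$, for otherwise $\widehat g$ would have a genuine pole and its boundary value could not be the locally integrable (indeed continuous) function that the structure forces. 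The same argument at $\sigma=-|\boldsymbol\omega|$ gives a zero of order $\geq k$ there as well. \textbf{Step 3.} Condition (2) enters to handle $\boldsymbol\omega=0$: $\int \square^k g\,dt=0$ says $\widehat F(\boldsymbol 0,0)=0$, and more generally it pins down the behavior at the apex of the cone; together with Step 2 this shows $\widehat g$ extends to a function that is smooth across the whole characteristic cone. \textbf{Step 4.} Having $\widehat g = \widehat F/(\sigma^2-|\boldsymbol\omega|^2)^k$ with the numerator divisible by the denominator in the appropriate smooth sense, estimate: away from the cone, $|\widehat g|$ inherits rapid decay from $\widehat F\in\mathcal{S}$ modulo the (bounded) reciprocal of the denominator at infinity; near the cone, the division leaves a smooth bounded quotient times the Schwartz decay of derivatives of $\widehat F$. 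Deduce that $\widehat g$ and every $\boldsymbol\omega^\alpha\sigma^\beta\widehat g$ are in $L^1$. \textbf{Step 5.} By Fourier inversion, $g$ and all its derivatives are bounded (indeed continuous and vanishing at infinity).

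\textbf{Main obstacle.} The delicate point is Step 2: converting "the distributional boundary value of the analytic function $\widehat g$ on $\mathbb{H}_-$ equals the a priori merely meromorphic expression $\widehat F/(\sigma^2-|\boldsymbol\omega|^2)^k$" into the hard conclusion "$\widehat F$ actually vanishes to order $k$ on each sheet of the cone." One must rule out that the putative pole is an artifact canceled by a singular distribution (a derivative of $\delta$ supported on the cone) in the boundary value — i.e. one needs that $g$ being a genuine smooth function (not just a distribution) forbids such terms. I would make this rigorous by a one-variable Laurent/Taylor expansion of $\widehat F(\boldsymbol\omega,\cdot)$ at $\sigma=\pm|\boldsymbol\omega|$ and matching against the Sokhotski–Plemelj description of boundary values of $(\sigma - |\boldsymbol\omega| - i0)^{-m}$, using that $\widehat g$, being the Fourier transform of a smooth compactly-supported-in-$t_0$-halfspace... — more precisely, using that $\square^k g = F$ with $F\in C_0^\infty$ forces, via the known fundamental solution of $\square^k$, that $g$ itself is smooth, so its Fourier transform cannot contain distributions singular along a hypersurface; this rigidity is exactly what makes the order-of-vanishing conclusion work. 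A secondary technical nuisance is uniformity as $\boldsymbol\omega\to 0$ and as $|\boldsymbol\omega|\to\infty$, handled by homogeneity of the denominator and the Schwartz bounds on $\widehat F$ together with condition (2).
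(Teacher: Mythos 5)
Your plan has two genuine gaps, and the second one is fatal. First, the circularity: Steps 1--2 apply Lemma \ref{L:PW} and manipulate $\hat g$, but the Paley--Wiener lemma requires $g$ to be a \emph{tempered} distribution, and a priori a smooth function supported in $H_{t_0}$ with $\square^k g\in C_0^\infty$ carries no growth bound whatsoever. Establishing that $g$ is well-behaved enough for its Fourier transform to be used freely is precisely the purpose of this lemma (the paper says so explicitly right after it), so you cannot start from $\hat g$. Second, and more seriously, the key claim of Step 2 is false: $\widehat{F}=\widehat{\square^k g}$ does \emph{not} vanish to order $k$ on the characteristic cone. Indeed, for $g=\mathcal{C}[f]$ with $f\in C_0^\infty$ the paper's necessity computation gives $\widehat{\square^k g}=(-1)^k\alpha_{2k-1}\,i\sigma\hat f$, which is generically nonzero at $\sigma=\pm|\boldsymbol\omega|$. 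The rigidity you invoke --- ``$g$ is a genuine smooth function, so $\hat g$ cannot contain distributions singular along a hypersurface'' --- is not a true principle: smoothness of $g$ controls the decay of $\hat g$ at infinity, not its local regularity ($\sin t$ is smooth and bounded with a purely singular transform). In fact $g$ in the range of $\mathcal{C}$ is bounded but typically does \emph{not} vanish at infinity (a branch of the cone can slide along $\supp f$ indefinitely), so $\hat g$ is genuinely a non-function distribution carrying singular parts on the cone, and the conclusions of Steps 4--5 ($\hat g\in L^1$, $g\to 0$ at infinity by Fourier inversion) are false for the very functions the lemma is about.

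The paper's argument avoids the Fourier transform of $g$ entirely. Setting $f:=\square^k g\in C_0^\infty$, it exhibits the forward-supported fundamental solution $\Phi=(-1)^k\alpha_{2k-1}^{-1}\Theta\ast D$ of $\square^k$ (supported in the solid cone $t\geq|\boldsymbol x|$), so $\Phi\ast f$ solves $\square^k u=f$ with support in a half-space; by the uniqueness statement (Corollary \ref{co: iterated_kernel}, which rests on the Cauchy-problem Lemma \ref{lm: dAlembert_Kernel}) one gets $g=\Phi\ast f$, and then the convolution is an integral of $f$ over a region meeting only the compact set $\supp f$, giving $|g|\leq C\max|f|\,\mathrm{Vol}(\supp f)$; derivatives are handled via $\partial^m(\Phi\ast f)=\Phi\ast\partial^m f$. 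If you want to salvage a Fourier-side proof, you would first need an independent a priori argument that $g$ is tempered and then work with boundary-value distributions of the form $(\sigma\mp|\boldsymbol\omega|-i0)^{-k}$ rather than insisting $\hat g$ be a function; at that point the physical-space argument is both shorter and stronger.
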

\begin{proof}

To prove the claim that $g$ is bounded, due to Corollary \ref{co: iterated_kernel}, it will suffice to construct a solution of the PDE
\begin{equation} \label{eq: pde_lemma}
    \square^k h = f 
\end{equation}
that is supported in some $H_{t_0}$ and is indeed bounded\footnote{Here $f$ is \textbf{defined} as $\square^k g$.}.

We achieve this by using a fundamental solution $\Phi^{(k,2k)}$ for $\square^k$. 
Then the needed solution of (\ref{eq: pde_lemma}) will be given by the convolution $\Phi^{(k,2k)} \ast f$, which will be supported on a half-space. 

Fundamental solutions for $\square^\xi h$ for any complex number $\xi$ were studied extensively in \citep{Bollini}. In particular, for integer $k$ and dimension $2k$ the fundamental solution is given by
\begin{equation}\label{eq: fundamental_solution}
    \Phi^{(k,2k)} = (-1)^{k}\alpha_{2k}^{-1}\Theta \ast D.
\end{equation}
The fact that it is a fundamental solution is easily verified by making use of the Fourier transform for $\Im(\sigma) > 0$:
\begin{equation} \label{eq: fundamental_computation_even}
    \mathcal{F}[\square^k \Phi^{(k,2k)} \ast f] = \alpha_{2k}^{-1} (i \sigma)^{-1} (\sigma^2 - |\boldsymbol \omega|^2)^k \alpha_{2k} \frac{i \sigma}{(\sigma^2 - |\boldsymbol \omega|^2)^k} \hat{f} = \hat{f},
\end{equation}
where $\hat{f}$ denotes the Fourier transform of $f$.

Furthermore, the convolution $\Theta \ast D$ is supported outside a cone, due to the geometry of the supports of $\Theta$ and $D$ (see Fig. \ref{fig: VolCone}).
\begin{figure}[ht!]
    \centering
    \raisebox{-.5\height}{\includegraphics[width=0.3\textwidth,trim=0 4cm 0 0]{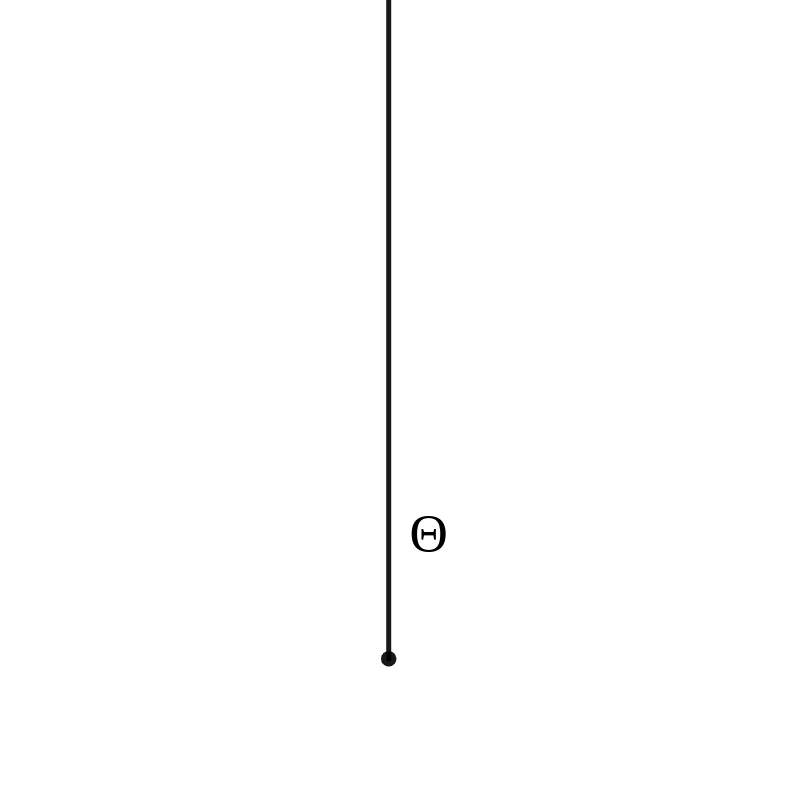}} $\ast$ \raisebox{-.5\height}{\includegraphics[width=0.3\textwidth,trim=0 0cm 0 0]{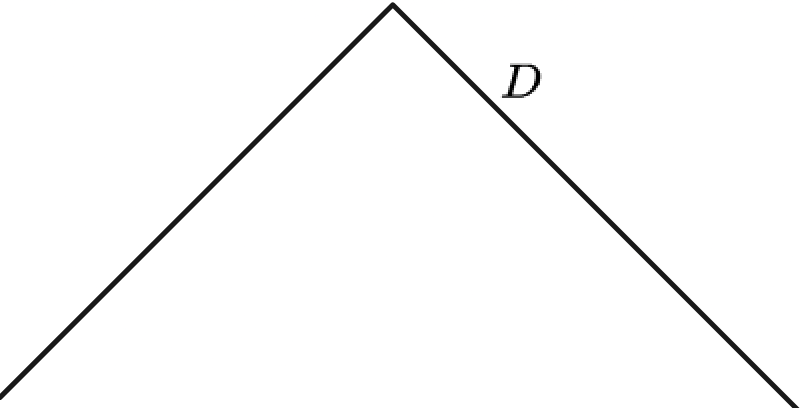}} $=$ \raisebox{-.5\height}{\includegraphics[width=0.3\textwidth,trim=0 2cm 0 0]{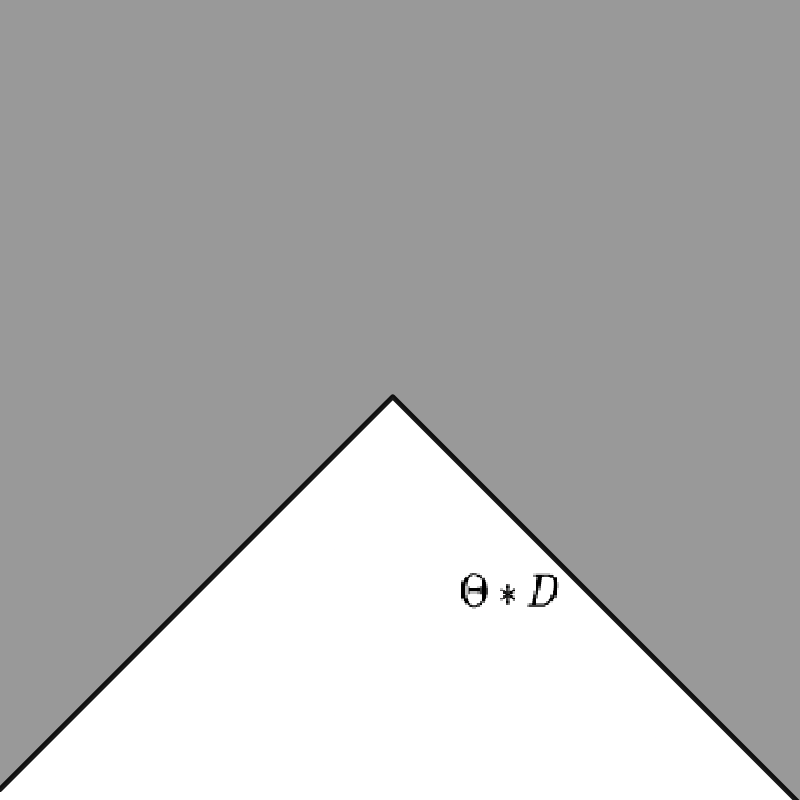}}
    \caption{The supports of $\Theta$ (left), $D$ (center), and their convolution (right).}
    \label{fig: VolCone}
\end{figure}

By construction, the function $h$ can be written $h=\Phi^{(k,2k)} \ast f$ (which is in fact closely related to the inversion formula for the conical Radon transform). Let us now show that $\Phi^{(k,2k)} \ast f$, and hence $g$, is bounded. Observe that
$$    |\Phi^{(k,2k)} \ast f(\boldsymbol x,t)| \leq |\alpha_{2k}|^{-1} \int_{|\boldsymbol x-\boldsymbol y| = t-s}  dS \int_{-\infty}^{s} |f(\boldsymbol y,z)|dz.
$$
Since the right hand side is a volume integral, we get the estimate
\begin{equation}\label{eq: bound}    
|\Phi^{(k,2k)} \ast f(\boldsymbol x,t)| \leq C \max (|f|) Vol(\supp (f)).
\end{equation}
This implies boundedness of $h=\Phi^{(k,2k)} \ast f$.

\end{proof}
\begin{remark}
By applying the same argument to $\partial^{m}\Phi^{(k,2k)} \ast f=\Phi^{(k,2k)} \ast \partial^m f$ where $m$ is a multiindex, one confirms that all derivatives are also bounded.
\end{remark}

\begin{lemma} \label{lm: bounded_odd}
Let $g \in C^{\infty}(\mathbb{R}^{2k} \times \mathbb{R})$ satisfy the conditions of Theorem \ref{thm: odd}, then $g$ and its derivatives are bounded.
\end{lemma}
\begin{proof}

Similarly to the previous lemma, we shall prove this by studying the solution of the equation
\begin{equation} \label{eq: ipde_lemma}
    \square^{2k} \mathcal{C}^{1}[h] = f \qquad f \in C_{0}^{\infty}(\mathbb{R}^{2k} \times \mathbb{R})
\end{equation}
under the constraint
\begin{equation} \label{eq: ipde_constraint}
    \int_{-\infty}^{\infty} \square^{2k} \mathcal{C}^{1}[h](\boldsymbol x,t)dt = 0 \qquad \forall \boldsymbol x \in \mathbb{R}^{2k}.
\end{equation}
Our approach here is similar to the one above: finding an appropriate fundamental solution of the equation
\begin{equation}\label{eq: ipde_fundamental}
    \square^{2k} \mathcal{C}^{1}[\Phi^{\left (\frac{2k+1}{2},2k+1 \right )}] = \delta
\end{equation}
and then using Corollary \ref{co: iterated_kernel}.

The relevant fundamental solution is
\begin{equation}\label{eq: fundamental_solution_odd}
    \Phi^{\left (\frac{2k+1}{2},2k+1 \right )} = \alpha_{2k+1}^{-1}\beta_{2k+1}^{-1}\Theta \ast D.
\end{equation}
To verify this, we make use of the Fourier transform and (\ref{eq: FFT_D'}) with $\Im(\sigma) > 0$ to get
\begin{align} \label{eq: FT_fundamental_solution}
    \mathcal{F}[\square^{2k} \mathcal{C}^{1}[ \Phi^{\left (\frac{2k+1}{2},2k+1 \right )} \ast f]] = \nonumber \\
    \alpha_{2k+1}^{-1} \beta_{2k+1}^{-1} \left (\frac{(\sigma^2 - |\boldsymbol \omega|^2)^{2k+1}}{i\sigma} \right) \alpha_{2k+1} \beta_{2k+1} \frac{i \sigma}{(\sigma^2 - |\boldsymbol \omega|^2)^{2k}} \hat{f} = \hat{f}.
\end{align}
Boundedness of $\Phi^{\left (\frac{2k+1}{2},2k+1 \right )} \ast f$ is proven exactly as in the previous case.
\end{proof}
\begin{remark}
Although we only require that $\supp g \subset H_{t_0}$, if $g$ is the CRT of some compactly supported function $f$ then it must be that $\supp g \subset \supp f + C(0,-\boldsymbol e_n, \pi/4)$. One can easily verify that the conditions imposed on $g$ in Theorems \ref{thm: even} and \ref{thm: odd} guarantee this.
\end{remark}

\begin{lemma} \label{lm: weighted_bounded_even}
Let $g \in C^{\infty}(\mathbb{R}^{2k-1} \times \mathbb{R})$ satisfy the conditions of Theorem \ref{thm: weighted_even}, then $g$ and its derivatives are bounded.
\end{lemma}
\begin{proof}

Again we seek a solution of the PDE
\begin{equation} \label{eq: weighted_pde_lemma}
    \square^{k-1} h = f 
\end{equation}
that is supported in some $H_{t_0}$.

The relevant fundamental solution $\Phi^{(k,2k)}$ for $\square^{k-1}$ is given by
\begin{equation}\label{eq: weighted_fundamental_solution}
    \Phi^{(k-1,2k)} = (-1)^{k-1}\beta_{2k}^{-1} \ast D^{1}.
\end{equation}
The fact that it is a fundamental solution is easily verified by making use of the Fourier transform for $\Im(\sigma) > 0$:
\begin{equation} \label{eq: weighted_fundamental_computation_even}
    \mathcal{F}[\square^{k-1} \Phi^{(k-1,2k)} \ast f] = \beta_{2k}^{-1} (\sigma^2 - |\boldsymbol \omega|^2)^{k-1}  \frac{\beta_{2k}}{(\sigma^2 - |\boldsymbol \omega|^2)^{k-1}} \hat{f} = \hat{f},
\end{equation}
where $\hat{f}$ denotes the Fourier transform of $f$.

By construction, the function $h$ can be written $h=\Phi^{(k-1,2k)} \ast f$. Let us now show that $\Phi^{(k-1,2k)} \ast f$, and hence $h$ is bounded. Observe that
$$    |\Phi^{(k-1,2k)} \ast f(\boldsymbol x,t)| \leq |\beta_{2k}|^{-1} \int_{|\boldsymbol x-\boldsymbol y| = t-s} f(\boldsymbol x,t) dS \leq |\beta_{2k}|^{-1} \int_{\mathbb{R}} \int_{\mathbb{R}^{n-1}} f(\boldsymbol x,t) d\boldsymbol x dt.
$$
Since the right hand side is a volume integral, we get the estimate
\begin{equation}\label{eq: weighted_bound}    
|\Phi^{(k-1,2k)} \ast f(\boldsymbol x,t)| \leq C \max (|f|) Vol(\supp (f)).
\end{equation}
This implies boundedness of $h=\Phi^{(k-1,2k)} \ast f$.

\end{proof}
\begin{remark}
By applying the same argument to $\partial^{m}\Phi^{(k-1,2k)} \ast f=\Phi^{(k-1,2k)} \ast \partial^m f$ where $m$ is a multiindex, one confirms that all derivatives are also bounded.
\end{remark}

\begin{lemma} \label{lm: weighted_bounded_odd}
Let $g \in C^{\infty}(\mathbb{R}^{2k} \times \mathbb{R})$ satisfy the conditions of Theorem \ref{thm: weighted_odd}, then $g$ and its derivatives are bounded.
\end{lemma}
\begin{proof}

Similarly to the previous lemma, we shall prove this by studying the solution of the equation
\begin{equation} \label{eq: weighted_ipde_lemma}
    \square^{2k-1} \mathcal{C}^{1}[h] = f \qquad f \in C_{0}^{\infty}(\mathbb{R}^{2k} \times \mathbb{R})
\end{equation}

The relevant fundamental solution is
\begin{equation}\label{eq: weighted_fundamental_solution_odd}
    \Phi^{\left (\frac{2k-1}{2},2k+1 \right )} = (-1)^{2k-1} \beta_{2k+1}^{-2} D^{1}.
\end{equation}
To verify this, we make use of the Fourier transform and (\ref{eq: FFT_D'}) with $\Im(\sigma) > 0$ to get
\begin{align} \label{eq: weighted_FT_fundamental_solution}
    \mathcal{F}[\square^{2k-2} \mathcal{C}^{1}[ \Phi^{\left (\frac{2k-1}{2},2k+1 \right )} \ast f]] = \nonumber \\
    \beta_{2k+1}^{-2} \left ((\sigma^2 - |\boldsymbol \omega|^2)^{2k-1} \right)   \frac{\beta_{2k+1}}{(\sigma^2 - |\boldsymbol \omega|^2)^{\frac{2k+1}{2}}} \frac{\beta_{2k+1}}{(\sigma^2 - |\boldsymbol \omega|^2)^{\frac{2k+1}{2}}} \hat{f} = \hat{f}.
\end{align}
Boundedness of $\Phi^{\left (\frac{2k-1}{2},2k+1 \right )} \ast f$ is proven exactly as in the previous case.
\end{proof}

The upshot of these lemmas is that $g$ is well-behaved enough for it and its derivatives to be identified with tempered distributions supported in a half-space, in which case we can utilize computations with Fourier transforms freely.

\subsection{Proof of Theorem \ref{thm: even}}

\begin{proof}
Let us start proving the necessity of the conditions. Suppose $g= \mathcal{C}[f]$ for some $f \in C_{0}^{\infty}(\mathbb{R}^{2k-1} \times \mathbb{R})$ such that $\supp{f} \subset H_{t_0}$. It follows from the definition of CRT that $g$ is smooth, bounded and $\supp g \subset H_{t_0}$ (in particular, the condition (iii) of the theorem holds)\footnote{Since $g=D \ast f$, the support of $g$ belongs to the sum of the supports of $f$ and the distribution $D$. Because  $\supp f \subseteq H_{t_0}$, $\supp g \subseteq H_{t_0}+H_0=H_{t_0}$.}. Therefore, for $\Im(\sigma) > 0$ one has
\begin{align*}
    \widehat{\square^k g}(\boldsymbol \omega,\sigma) &= (-1)^{k} (\sigma^2 - |\boldsymbol \omega|^2)^k \alpha_{2k} \frac{i \sigma}{(\sigma^2 - |\boldsymbol \omega|^2)^k} \hat{f}(\boldsymbol \omega,\sigma) \\
    &= (-1)^{k} \alpha_{2k} i \sigma \hat{f}(\boldsymbol \omega,\sigma).
\end{align*}
In the limit $\Im(\sigma) \rightarrow 0^+$, we obtain
\begin{equation}
    \square^k g(\boldsymbol x,t) = (-1)^{k} \alpha_{2k} f_t(\boldsymbol x,t).
\end{equation}
Conditions (i) and (ii) follow immediately from the previous identity, which finishes the proof of necessity of the conditions.

To prove the sufficiency of the conditions we will make use of the inverse CRT. Suppose $g$ is smooth and satisfies the conditions specified in Theorem \ref{thm: even}. Define
\begin{equation}
    f( \boldsymbol x,t): = (-1)^{k}\alpha_{2k}^{-1} \int_{-\infty}^{t} \square^k g(\boldsymbol x,z)dz.
\end{equation}
Then $f$ is smooth with compact support, as $g$ is smooth and satisfies conditions (i) and (ii). Moreover, we have for $\Im(\sigma) > 0$

\begin{align*}
    \widehat{\mathcal{C}[f]}(\boldsymbol \omega,\sigma) &= \frac{\alpha_{2k}}{\alpha_{2k}}  \frac{i \sigma}{(\sigma^2 - |\boldsymbol \omega|^2)^k} \left ( \frac{1}{i \sigma} (\sigma^2 - |\boldsymbol \omega|^2)^k \hat{g}  \right )=\hat{g}.
\end{align*}
In the limit $\Im(\sigma) \rightarrow 0^+$ we find that in fact $\mathcal{C}[f]=g$ and thus $g$ is in the range\footnote{Note that this computation is justified by Lemma \ref{lm: bounded_even}.} of $\mathcal{C}$.
\end{proof}

\subsection{Proof of Theorem \ref{thm: odd}}
\begin{proof}
We proceed in the same vein as in the previous proof. Starting with proving necessity, suppose $g= \mathcal{C}[f]$ for some $f \in C_{0}^{\infty}(\mathbb{R}^{2k} \times \mathbb{R})$, and $\supp{f} \subset H_{t_0}$. Then, as in the previous theorem,  $g$ is smooth and bounded, and $\supp g\subset H_{t_0}$. Thus, for $\Im(\sigma) > 0$ we get
\begin{align*}
    \widehat{\square^{2k} \mathcal{C}^{1}[ g]}(\boldsymbol \omega,\sigma) &= \frac{\beta_{2k+1}}{(\sigma^2 - |\boldsymbol \omega|^2)^{k-1/2}} (\sigma^2 - |\boldsymbol \omega|^2)^{2k} \alpha_{2k+1} \frac{ i \sigma}{(\sigma^2 - |\boldsymbol \omega|2)^{k+1/2}} \hat{f}(\boldsymbol \omega,\sigma) \\
    &= \alpha_{2k+1} \beta_{2k+1} i \sigma \hat{f}(\boldsymbol \omega,\sigma).
\end{align*}
Sending $\Im(\sigma) \rightarrow 0^+$, we obtain
\begin{equation}
    \square^{2k} \mathcal{C}^{1}[ g](\boldsymbol x,t) = \alpha_{2k+1} \beta_{2k+1}  f_t(\boldsymbol x,t).
\end{equation}
This implies the conditions (i) and (ii), while (iii) has already been established. 

Let us turn to proving sufficiency of the conditions. Suppose $g$ satisfies the conditions of Theorem \ref{thm: odd}. We define
\begin{equation}
    f(\boldsymbol x,t): = \alpha_{2k+1}^{-1} \beta_{2k+1}^{-1} \int_{-\infty}^{t} \square^{2k} \mathcal{C}^{1}[ g](\boldsymbol x,z)dz.
\end{equation}
Then $f$ is smooth with compact support, as $g$ is smooth and satisfies conditions (i) and (ii). Moreover, we have for $\Im(\sigma) >0$

\begin{align*}
    \widehat{\mathcal{C}[f]}(\boldsymbol \omega,\sigma) = \frac{\alpha_{2k+1}}{\alpha_{2k+1}\beta_{2k+1}} \frac{i\sigma}{(\sigma^2 - |\boldsymbol \omega|^2)^{k+1/2}} \left ( \frac{\beta_{2k+1} (\sigma^2 - |\boldsymbol \omega|^2)^{2k}}{ i \sigma (\sigma^2 - |\boldsymbol \omega|^2)^{k-1/2}} \right ) \hat{g}=\hat{g}.
\end{align*}
Then in the limit $\Im(\sigma) \rightarrow 0^+$ we find that $\mathcal{C}[f]=g$ and thus $g$ is in the range of $\mathcal{C}$.
\end{proof}

\subsection{Proof of Theorem \ref{thm: weighted_even}}

\begin{proof}
Once again, we begin with the proof of necessity. Suppose $g= \mathcal{C}^{1}[f]$ for some $f \in C_{0}^{\infty}(\mathbb{R}^{2k-1} \times \mathbb{R})$ such that $\supp{f} \subset H_{t_0}$. It follows from the definition of weighted CRT that $g$ is smooth, bounded and $\supp g \subset H_{t_0}$ (in particular, the condition (ii) of the theorem holds). Therefore, for $\Im(\sigma) > 0$ one has
\begin{align*}
    \widehat{\square^{k-1} g}(\boldsymbol \omega,\sigma) &= (-1)^{k-1} (\sigma^2 - |\boldsymbol \omega|^2)^{k-1} \frac{\beta_{2k}}{(\sigma^2 - |\boldsymbol \omega|^2)^{k-1}} \hat{f}(\boldsymbol \omega,\sigma) \\
    &= (-1)^{k-1} \beta_{2k} \hat{f}(\boldsymbol \omega,\sigma).
\end{align*}
In the limit $\Im(\sigma) \rightarrow 0^+$, we obtain
\begin{equation}
    \square^{k-1} g(\boldsymbol x,t) = (-1)^{k-1} \beta_{2k} f(\boldsymbol x,t).
\end{equation}
Condition (i) of the theorem follows immediately from the previous identity, which finishes the proof of necessity of the conditions.

Now to prove sufficiency, suppose $g$ is smooth and satisfies the conditions specified in Theorem \ref{thm: weighted_even}. Define
\begin{equation}
    f( \boldsymbol x,t): = (-1)^{k-1}\beta_{2k}^{-1} \square^{k-1} g(\boldsymbol x,t).
\end{equation}
Then $f$ is smooth with compact support, as $g$ is smooth and satisfies condition (i). Moreover, we have for $\Im(\sigma) > 0$

\begin{align*}
    \widehat{\mathcal{C}^{1}[f]}(\boldsymbol \omega,\sigma) &= \frac{\beta_{2k}}{\beta_{2k}}  \frac{1}{(\sigma^2 - |\boldsymbol \omega|^2)^{k-1}} \left ( (\sigma^2 - |\boldsymbol \omega|^2)^{k-1} \hat{g}  \right )=\hat{g}.
\end{align*}
In the limit $\Im(\sigma) \rightarrow 0^+$ we find that in fact $\mathcal{C}[f]=g$ and thus $g$ is in the range of $\mathcal{C}$.
\end{proof}

\subsection{Proof of Theorem \ref{thm: weighted_odd}}
\begin{proof}
Suppose $g= \mathcal{C}[f]$ for some $f \in C_{0}^{\infty}(\mathbb{R}^{2k} \times \mathbb{R})$, and $\supp{f} \subset H_{t_0}$. Then,  $g$ is smooth and bounded, and $\supp g\subset H_{t_0}$. Thus, for $\Im(\sigma) > 0$ we get
\begin{align*}
    \widehat{\square^{2k-1} \mathcal{C}^{1}[ g]}(\boldsymbol \omega,\sigma) &= \frac{\beta_{2k+1}}{(\sigma^2 - |\boldsymbol \omega|^2)^{k-1/2}} (\sigma^2 - |\boldsymbol \omega|^2)^{2k-1} \frac{\beta_{2k+1} }{(\sigma^2 - |\boldsymbol \omega|2)^{k-1/2}} \hat{f}(\boldsymbol \omega,\sigma) \\
    &=\beta_{2k+1}^{2} \hat{f}(\boldsymbol \omega,\sigma).
\end{align*}
Sending $\Im(\sigma) \rightarrow 0^+$, we obtain
\begin{equation}
    \square^{2k-1} \mathcal{C}^{1}[ g](\boldsymbol x,t) =  \beta_{2k+1}^{2}  f(\boldsymbol x,t).
\end{equation}
This implies the condition (i). 

Now suppose $g$ satisfies the conditions of Theorem \ref{thm: weighted_odd}. We define
\begin{equation}
    f(\boldsymbol x,t): = (-1)^{2k-1} \beta_{2k+1}^{-2} \square^{2k-1} \mathcal{C}^{1}[ g](\boldsymbol x,z)dz.
\end{equation}
Then $f$ is smooth with compact support, as $g$ is smooth and satisfies condition (i). Moreover, we have for $\Im(\sigma) >0$

\begin{align*}
    \widehat{\mathcal{C}^{1}[f]}(\boldsymbol \omega,\sigma) = \frac{\beta_{2k+1}}{\beta_{2k+1}^2} \frac{1}{(\sigma^2 - |\boldsymbol \omega|^2)^{k-1/2}} \left ( \frac{\beta_{2k+1} (\sigma^2 - |\boldsymbol \omega|^2)^{2k-1}}{ (\sigma^2 - |\boldsymbol \omega|^2)^{k-1/2}} \right ) \hat{g} =\hat{g}.
\end{align*}
Then in the limit $\Im(\sigma) \rightarrow 0^+$ we find that $\mathcal{C}[f]=g$ and thus $g$ is in the range of $\mathcal{C}$.
\end{proof}
\section{Arbitrary angle of the cone}\label{S:angle}
For simplicity we have so far restricted our attention to cones with a right angle opening and central axis aligned with coordinate $t$ of our chosen coordinate system $(\boldsymbol x,t)$. The axis' direction does not restrict the generality. Let us tackle an arbitrary half-opening angle $\phi$. The mapping $(x,t)\mapsto ((\cot \phi) \boldsymbol x,t)$ transforms the $\pi/4$ half-opening angle to $\phi$.  

This suggests consideration of the modified d'Alembertian 
\begin{equation} \label{eq: dAlembertian_COB}
    \square_{\phi}  \coloneqq \frac{\partial^2}{\partial t^{2}} - (\tan^2 \phi) \Delta_{\boldsymbol x}
\end{equation}
Arguing along the same line as the previous sections will prove the results stated below\footnote{One could also obtain them by changing variables.} for the CRT $\mC_\phi$ with half-opening angle $\phi$.
\begin{theorem}\label{thm: even_COB} Let $n=2k$ be even. A function $g\in C_0^\infty(\R^{2k-1}\times \R)$ is in the range of $\mC_\phi$ acting on $C_0^\infty(\R^n)$ if and only if the following conditions are satisfied: 
\begin{enumerate}[(i)]
    \item $\square_{\phi}^{k} g(\boldsymbol x,t)$ has compact support.
    \item $\int_{-\infty}^{\infty} \square_{\phi}^{k} g(\boldsymbol x,t)dt=0$ for every $\boldsymbol x \in \mathbb{R}^{2k-1}$.
    \item $\supp g \subseteq H_{t_0}$ for some $t_0 \in \mathbb{R}$.
\end{enumerate}
\end{theorem}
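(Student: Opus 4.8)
The plan is to deduce Theorem~\ref{thm: even_COB} from Theorem~\ref{thm: even} via the linear change of variables $M_\phi(\boldsymbol x,t)=(\tan\phi\,\boldsymbol x,t)$ indicated in the text, which maps the right-angle cone $\{z-t=|\boldsymbol y-\boldsymbol x|\}$ onto the cone $\{z-t=\cot\phi\,|\boldsymbol y-\boldsymbol x|\}$ of half-opening angle $\phi$. Writing $M_\phi^\ast u:=u\circ M_\phi$, two intertwining identities carry the whole argument. First, a chain-rule computation gives $\square\,(M_\phi^\ast u)=M_\phi^\ast(\square_\phi u)$, hence $\square^k(M_\phi^\ast u)=M_\phi^\ast(\square_\phi^k u)$ for every $k$. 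Second, the pull-back $M_\phi^\ast$ intertwines the two cone transforms up to a nonzero constant: there is $c_\phi\neq0$ with
\[
\mathcal{C}[\,M_\phi^\ast f\,]=c_\phi\,M_\phi^\ast\!\big(\mathcal{C}_\phi[f]\big),\qquad f\in C_0^\infty(\R^n).
\]
(One computes $c_\phi=\sqrt2\,\sin\phi\,\cot^{\,n-1}\phi$, but only its non-vanishing will be used.) So the first steps are to establish these two identities; then, because $f\mapsto M_\phi^\ast f$ is a linear bijection of $C_0^\infty(\R^n)$ and $M_\phi^\ast$ also maps $C^\infty(\R^n)$ onto itself, the second identity says exactly that $g$ lies in the range of $\mathcal{C}_\phi$ on $C_0^\infty(\R^n)$ if and only if $\tilde g:=c_\phi\,M_\phi^\ast g$ lies in the range of $\mathcal{C}$ on $C_0^\infty(\R^n)$.

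Having made this reduction, the next step is to apply Theorem~\ref{thm: even} to $\tilde g$ and read its three conditions back through $M_\phi^\ast$. Condition (1) for $\tilde g$, ``$\square^k\tilde g$ has compact support'', becomes, via $\square^k\tilde g=c_\phi\,M_\phi^\ast(\square_\phi^k g)$ and the fact that $M_\phi$ is a linear automorphism of $\R^n$, the requirement that $\square_\phi^k g$ have compact support, i.e.\ condition (1) of Theorem~\ref{thm: even_COB}. Condition (2) for $\tilde g$, $\int_\R\square^k\tilde g(\boldsymbol x,t)\,dt=0$, turns under the substitution $\boldsymbol x'=\tan\phi\,\boldsymbol x$ (note $M_\phi$ fixes the $t$-coordinate, so the inner integral is untouched) into $c_\phi\int_\R\square_\phi^k g(\boldsymbol x',t)\,dt=0$ for all $\boldsymbol x'$, which is condition (2). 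Finally, since $M_\phi(H_{t_0})=H_{t_0}$, we have $\supp\tilde g\subseteq H_{t_0}\iff\supp g\subseteq H_{t_0}$, matching condition (3). Chaining the equivalences gives the theorem.

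The step I expect to be the main obstacle is the second intertwining identity — more precisely, verifying that the Jacobian relating the two surface measures is genuinely a \emph{constant} (depending on $\phi$ and $n$ but not on the point of integration), so that no variable weight is introduced and one really lands back on the unweighted transform $\mathcal{C}$. This is where the special geometry enters: each relevant cone is a graph $z=t_0+\lambda\,|\boldsymbol y-\boldsymbol x_0|$ over the hyperplane $\R^{n-1}_{\boldsymbol y}$, with $\lambda=1$ for $\mathcal{C}$ and $\lambda=\cot\phi$ for $\mathcal{C}_\phi$, so its surface measure is the constant multiple $\sqrt{1+\lambda^2}\,d\boldsymbol y$ of Lebesgue measure on the base; since $M_\phi$ acts on that base as the scalar $\tan\phi$, moving one graph onto the other distorts area by the constant $\sqrt2\,\sin\phi\,\cot^{\,n-1}\phi$, giving $c_\phi$.

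Alternatively — the route hinted at by ``one can argue along the same line as the previous sections'' — one may bypass the change of variables and redo the Fourier computations of Section~\ref{S:proofs} directly for $\mathcal{C}_\phi$: the same spherical-coordinates calculation (or the substitution $\boldsymbol x=\tan\phi\,\boldsymbol y$) gives $\widehat{D_\phi}(\boldsymbol\omega,\sigma)=\gamma_\phi\,i\sigma\,(\sigma^2-\tan^2\phi\,|\boldsymbol\omega|^2)^{-k}$ on $\mathbb{H}_-$, while the Fourier multiplier of $\square_\phi$ is $-(\sigma^2-\tan^2\phi\,|\boldsymbol\omega|^2)$; one then repeats the necessity and sufficiency arguments of Theorem~\ref{thm: even} verbatim, noting that the analogues of Lemma~\ref{lm: dAlembert_Kernel}, Corollary~\ref{co: iterated_kernel} and Lemma~\ref{lm: bounded_even} remain valid because $\square_\phi$ is again a (rescaled) wave operator with the same finite-speed-of-propagation / half-space-support behaviour and $\Theta\ast D_\phi$ is again supported in a half-space.
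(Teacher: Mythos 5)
Your proposal is correct and is essentially the paper's own argument: the paper disposes of Theorem \ref{thm: even_COB} by remarking that one can either rerun the Fourier/fundamental-solution argument of Section \ref{S:proofs} with $\square_\phi$ in place of $\square$, or obtain the result by changing variables in Theorem \ref{thm: even}, and your detailed reduction via the intertwinings $\square^{k}(M_\phi^{\ast}u)=M_\phi^{\ast}(\square_\phi^{k}u)$ and $\mathcal{C}[M_\phi^{\ast}f]=c_\phi\,M_\phi^{\ast}(\mathcal{C}_\phi[f])$ with the constant $c_\phi=\sqrt{2}\,\sin\phi\,\cot^{n-1}\phi\neq 0$ is exactly the latter (footnoted) route, carried out correctly, including the transfer of conditions (1)--(3). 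Your closing sketch of the direct Fourier computation coincides with the former route, so nothing essential is missing.
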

An analogous result for odd dimensions is as follows:
\begin{theorem}\label{thm: odd_COB} Let $n=2k+1$ be odd.
A function $g \in C^{\infty}(\mathbb{R}^{2k} \times \mathbb{R})$ is in the range of $\mathcal{C}_{\phi}$ on $C_{0}^{\infty}(\mathbb{R}^{2k} \times \mathbb{R})$ if and only if the following conditions are satisfied:
\begin{enumerate}[(i)]
    \item $\square_{\phi}^{2k} \mathcal{C}_{\phi}^{1}[g](\boldsymbol x,t)$ has compact support.
    \item $\int_{-\infty}^{\infty} \square_{\phi}^{2k} \mathcal{C}_{\phi}^{1}[g](\boldsymbol x,t)dt=0$ for every $\boldsymbol x \in \mathbb{R}^{2k}$.
    \item $\supp g \subseteq H_{t_0}$ for some $t_0 \in \mathbb{R}$.
\end{enumerate}
where $\mathcal{C}_{\phi}^{1}$ is the weighted CRT corresponding to the opening angle $\phi$:
\begin{equation}
    \mathcal{C}_{\phi}^{1}[f](\boldsymbol x, t) =  D_{\phi}^{1} \ast f= \left [ |((\cot \phi) \boldsymbol x, t)|^{-1} \delta(t - \cot \phi |x|) \right ] \ast f
\end{equation}
\end{theorem}

\begin{theorem} \label{thm: weighted_even_COB} Let $n=2k$ be even (and thus $\boldsymbol x\in\mathbb{R}^{2k-1}$).
A function $g \in C^{\infty}(\mathbb{R}^{2k-1} \times \mathbb{R})$ is in the range of $\mathcal{C}_{\phi}^{1}$ on $C_{0}^{\infty}(\mathbb{R}^{2k-1} \times \mathbb{R})$ if and only if the following conditions are satisfied:
\begin{enumerate}[(i)]
    \item $\square_{\phi}^{k-1} g(\boldsymbol x,t)$ has compact support.
    \item $\supp g \subseteq H_{t_0}$ (see (\ref{eq:Ht})) for some $t_0 \in \mathbb{R}$.
\end{enumerate}
\end{theorem}

An analogous result for odd dimensions is as follows:
\begin{theorem}\label{thm: weighted_odd_COB}  Let $n=2k+1$ be odd (and thus $\boldsymbol x\in\mathbb{R}^{2k}$).
A function $g \in C^{\infty}(\mathbb{R}^{2k} \times \mathbb{R})$ is in the range of $\mathcal{C}_{\phi}^{1}$ on $C_{0}^{\infty}(\mathbb{R}^{2k} \times \mathbb{R})$ if and only if
\begin{enumerate}[(i)]
    \item $\square_{\phi}^{2k-1} \mathcal{C}_{\phi}^{1}[g](\boldsymbol x,t)$ has compact support.
    \item $\supp g \subseteq H_{t_0}$ for some $t_0 \in \mathbb{R}$.
\end{enumerate}
\end{theorem}
\subsection{Proof of Theorems for an arbitrary angle of the cone}
First note that Lemma \ref{lm: dAlembert_Kernel} holds for any wave speed, in particular the result holds for $\square_\phi$. Next, observe that the critical computations in the proofs of Theorems \ref{thm: even} and \ref{thm: odd} are (\ref{eq: fundamental_computation_even}) and (\ref{eq: FT_fundamental_solution}), which are still valid due to the respective fundamental solutions being tempered distributions supported on a half-space. It will therefore suffice to produce tempered distributions which are fundamental solutions to the PDEs:
\begin{equation} \label{eq: PDE_COB_even}
    \square_{\phi}^k \Phi_{\phi}^{(k,2k)} = \delta
\end{equation}
and
\begin{equation} \label{eq: PDE_COB_odd}
    \square_{\phi}^{2k} \mathcal{C}_{\phi}^{1}[\Phi_{\phi}^{\left (\frac{2k+1}{2},2k+1 \right )}] = \delta.
\end{equation}
We can repeat the computation at the end of Section \ref{S:notation} to determine the Fourier transforms of $D_\phi$ and $D_{\phi}^{1}$ on $\mathbb{H}_+$:
\begin{align*}
    \widehat{D_\phi}(\boldsymbol \omega,\sigma)
    &= \sin^{n-2} \phi \sec^{n-1} \phi \int_{-\infty}^{0} \int_{\partial B(0,1)} \left ( e^{ir \tan \phi \boldsymbol \omega \cdot \boldsymbol \theta -i r \sigma) } \right ) d\boldsymbol \theta (-r)^{n-2} dr \\
    &=\alpha_{n,\phi} \frac{i\sigma}{(\sigma^2-\tan^2 \phi|\boldsymbol \omega|^2)^{\frac{n}{2}}},
\end{align*}
where now,
\begin{equation} \label{eq: alpha_phi}
    \alpha_{n,\phi}  \coloneqq -\sin^{n-2} \phi \sec^{n-1} \phi (-1)^{-\frac{n}{2}} 2^{n-1} \pi^{\frac{n-2}{2}} \Gamma \left ( \frac{n}{2} \right).
\end{equation}
Using the relation $t \sec \phi D_{\phi}^{1} = D_\phi$, we conclude that on $\mathbb{H}_+$ one has
\begin{equation} \label{eq: FFT_D'_phi}
    \widehat{D^{1}}(\boldsymbol \omega,\sigma) = \frac{\beta_{n,\phi}}{(\sigma^2-\tan^2 \phi |\boldsymbol \omega|^2)^{\frac{n-2}{2}}},
\end{equation}
where now 
\begin{equation} \label{eq: beta_phi}
    \beta_{n,\phi}  \coloneqq - \sin^{n-2} \phi \sec^{n-2} \phi (-1)^{-\frac{n}{2}} \frac{2^{n-1}}{2-n} \pi^{\frac{n-2}{2}} \Gamma \left ( \frac{n}{2} \right).
\end{equation}
The fundamental solution for (\ref{eq: PDE_COB_even}) is
\begin{equation}\label{eq: fundamental_solution_COB_even}
    \Phi_{\phi}^{(k,2k)} = (-1)^{k}\alpha_{2k,\phi}^{-1}\Theta \ast D_\phi
\end{equation}
and the fundamental solution for (\ref{eq: PDE_COB_odd}) is
\begin{equation}\label{eq: fundamental_solution_COB_odd}
    \Phi_{\phi}^{\left (\frac{2k+1}{2},2k+1 \right )} = \alpha_{2k+1,\phi}^{-1}\beta_{2k+1,\phi}^{-1}\Theta \ast D_\phi.
\end{equation}
The remainder of the proofs is identical to the proofs in Section \ref{S:proofs}.
\section{A range description of the full CRT} \label{S: Full_Range}

We now turn our attention to the full CRT, i.e. the one that uses \emph{all} cones. Before we discuss its range, we recall the divergent beam and spherical mean transforms. The reason is that the CRT can be factored into the composition of the former two \citep{Fatma}. This will induce certain additional characteristics on the range of the CRT. These new features, along with Theorems \ref{thm: even_COB} and \ref{thm: odd_COB} will lead to a description of the range of the full CRT.

\subsection{The Divergent Beam Transform} \label{sec: divergent_beam}
\begin{figure}[ht!]
    \centering
    \includegraphics[width=0.4\textwidth]{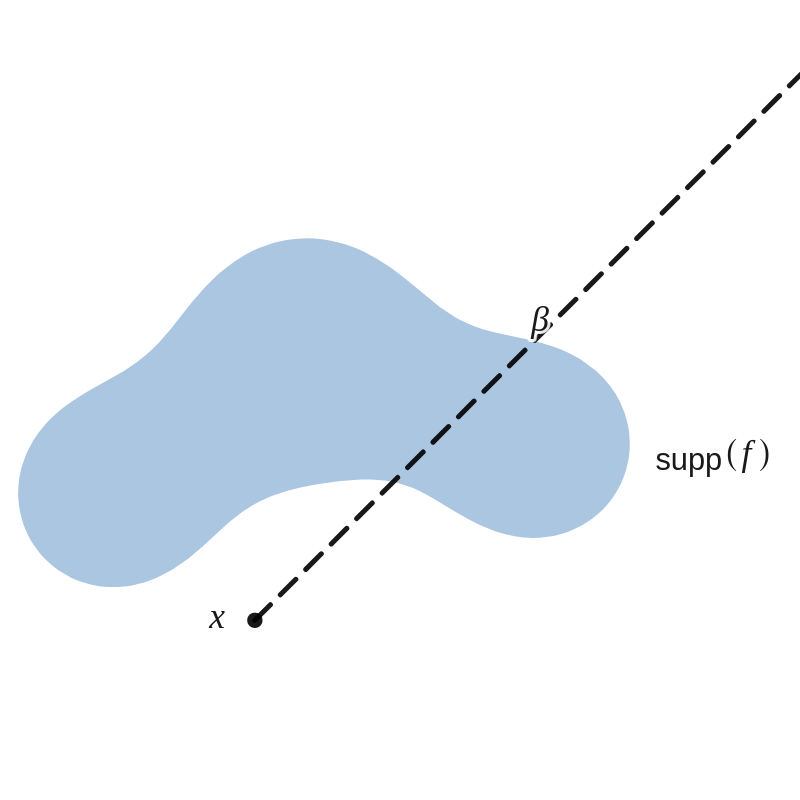}
    \caption{Schematic representation of the divergent beam transform.}
    \label{fig: DivergentBeam}
\end{figure}
The divergent beam transform $P[f](\boldsymbol x, \boldsymbol \beta)$ of $f\in C_0^{\infty}(\R^n)$ is the integral of $f$ over the ray originating at $\boldsymbol x$ and emanating in the direction $\boldsymbol \beta \in \mathbb{S}^{n-1}$ \citep{Natt} (see Fig. \ref{fig: DivergentBeam}):
\begin{equation} \label{eq: divergent_beam_transform}
    P[f](\boldsymbol x, \boldsymbol \beta)= \int_{0}^{\infty} f(\boldsymbol x + t \boldsymbol \beta) dt.
\end{equation}
It is evident from the definition that for $f\in C_0^{\infty}(\R^n)$, $P[f]\in C^{\infty}(\R^n \times \mathbb{S}^{n-1})$.
\subsection{The Spherical Mean Transform} \label{sec: spherical_mean}
\begin{figure}[ht!]
    \centering
    \includegraphics[width=0.4\textwidth]{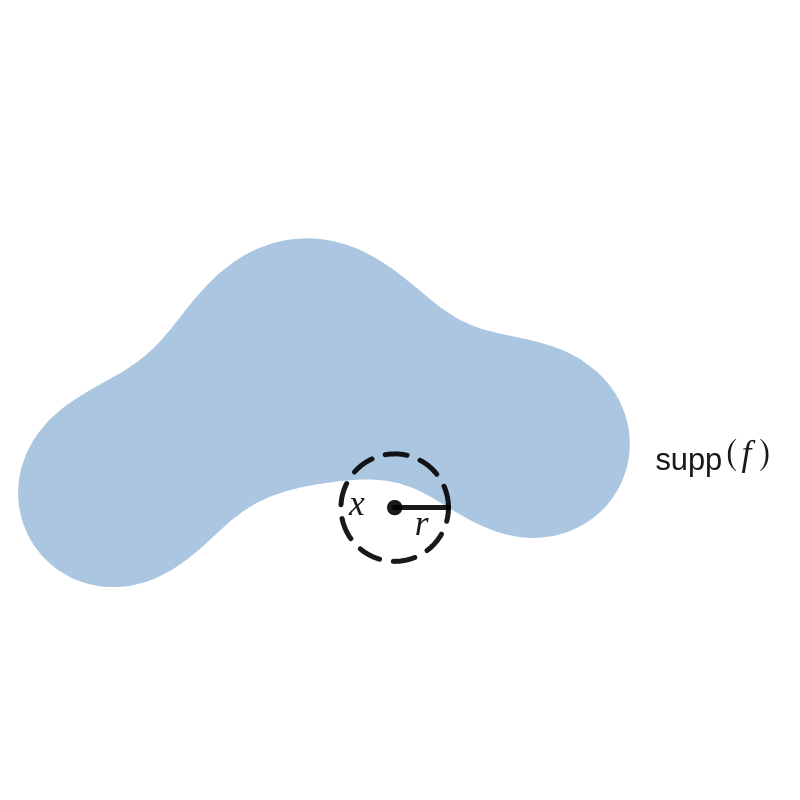}
    \caption{Schematic representation of the spherical mean transform.}
    \label{fig: SphericalMean}
\end{figure}
The spherical mean transform $M_S[f](\boldsymbol x,r)$ of $f\in C^{\infty}(\R^n)$ is the average of $f$ over the sphere with center $\boldsymbol x$ and radius $r\in \R_+$ (see \citep{Fritz} and Fig. \ref{fig: SphericalMean}):
\begin{equation} \label{eq: spherical_mean_transform}
    M_S[f](\boldsymbol x, r)= \frac{1}{\omega_{n-1}} \int_{\partial B(\boldsymbol x, r)} f(\boldsymbol y) d\boldsymbol \theta (\boldsymbol y)
\end{equation}
where $\omega_n$ is the surface area of the $n-$sphere. The spherical mean transform is extremely interesting in its own right, and several works are devoted to its study (see e.g. \citep{Agranovsky2009,Helgason,Fritz,Rubin}). A key property of the spherical mean transform is that its range is contained in the kernel of the Euler--Poisson--Darboux operator \citep{Fritz,Rubin}:
\begin{equation} \label{eq: EPD}
    \mathcal{E}  \coloneqq \frac{\partial ^2}{\partial r^2} +\frac{n-1}{r} \frac{\partial}{\partial r} - \Delta_{\boldsymbol x}
\end{equation}
\begin{figure}[ht!]
    \centering
    \includegraphics[width=0.4\textwidth]{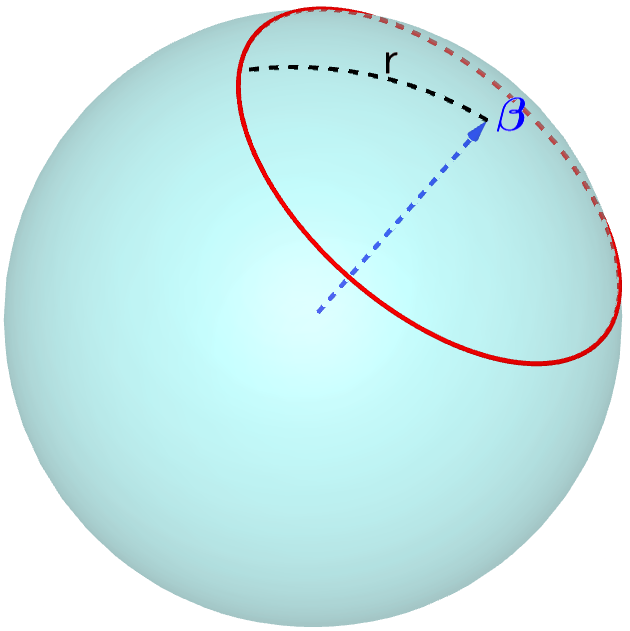}
    \caption{Schematic representation of the spherical mean transform on the sphere.}
    \label{fig: SphericalMeanOnSphere}
\end{figure}
In fact, these results can be generalized to the spherical mean transform over a wide class of manifolds (namely, two-point homogeneous spaces) \citep{Helgason}. The case of the sphere will be of particular interest to us. 

The spherical mean transform $M[f](\boldsymbol \beta,r)$ of $f\in C^{\infty}(\mathbb{S}^{n-1})$ is the average of $f$ over the $(n-2)$-sphere with center $\boldsymbol \beta \in \mathbb{S}^{n-2}$ and geodesic radius $r\in (0,\pi/2)$ (see Fig. \ref{fig: SphericalMeanOnSphere}):
\begin{equation} \label{eq: spherical_mean_transform_on_sphere}
    M[f](\boldsymbol \beta, r)= \frac{1}{A_{n-2}(r)} \int_{\partial B(\boldsymbol \beta, r)} f(\boldsymbol y) d\boldsymbol \theta (\boldsymbol y)
\end{equation}
where  $A_n(r)$ is the surface area of the $n-$sphere with geodesic radius $r$. From this point forward we will refer to (\ref{eq: spherical_mean_transform_on_sphere}) simply as the spherical mean transform. Two key properties of this spherical mean transform which will be critical in our discussion of the range of the CRT are the following injectivity result and range description.
\begin{theorem}[Injectivity (Helgason \citep{Helgason})] \label{thm: injectivity}
For any fixed $r>0$ the transform $M[f](\boldsymbol \beta,r)$ is injective for $f \in L^1(\mathbb{S}^{n-1})$.
\end{theorem}
\begin{theorem}[Range (Helgason\protect\footnotemark \citep{Helgason})] \label{thm: range}
The range of the transform $M[f](\boldsymbol \beta,r)$ over $C^2(\mathbb{S}^{n-2})$ is the kernel of the Euler--Poisson--Darboux type operator 
\begin{equation} \label{eq: EPD_sphere}
    \mathcal{E}_S \coloneqq \frac{\partial^2 }{\partial r^2} + \frac{A'(r)}{A(r)} \frac{\partial }{\partial r} - \Delta_{S}
\end{equation}
where $\Delta_S$ is the Laplace-Beltrami operator on the sphere $\mathbb{S}^{n-1}$ and $A'(r)$ is the derivative of $A(r)$.
\end{theorem}
\footnotetext{Helgason in fact proved in \citep{Helgason} these two results for the general case of the spherical mean transforms on compact two-point homogeneous spaces.}
\subsection{Factoring the CRT} \label{sec: factor}
As before, the CRT can be factored into the composition of the divergent beam and spherical mean transforms. This can be seen through the following computation:
\begin{align*}
    \mathcal{C}[f](\boldsymbol x, \phi, \boldsymbol \beta) &= \sin \phi \int_{\mathbb{R}^n} f(\boldsymbol y) \delta ((\boldsymbol y - \boldsymbol x)\cdot \boldsymbol \beta - |\boldsymbol y - \boldsymbol x| \cos \phi)d\boldsymbol y \\
    & =  \sin \phi \int_{\mathbb{R}^n} f(\boldsymbol x -\boldsymbol y) \delta (\boldsymbol y\cdot \boldsymbol \beta - |\boldsymbol y| \cos \phi)d\boldsymbol y \\
    & = \sin \phi \int_{\mathbb{S}^{n-1}} P[f](\boldsymbol x, \boldsymbol y) \delta (\boldsymbol y\cdot \boldsymbol \beta -\cos \phi)d\theta(y) \\
    & = \sin \phi \int_{\mathbb{S}^{n-2}} P[f](\boldsymbol x,\cos \phi \boldsymbol \beta + \sin \phi \boldsymbol y) d\theta(y) \\
    & = \sin \phi A(\phi) M[P[f](\boldsymbol x, \boldsymbol \alpha)](\boldsymbol \beta, \phi)),
\end{align*}
where the spherical mean transform $M[P(\boldsymbol x,\boldsymbol \alpha)]$ is taken with respect to $\alpha \in \mathbb{S}^{n-1}$. Geometrically this factoring makes sense, as the surface cone $\mathfrak{C}(\boldsymbol x, \phi, \boldsymbol \beta)$ is the union of rays originating from $\boldsymbol x$ and emanating in the directions $\boldsymbol \alpha$ satisfying $\boldsymbol \alpha \cdot \boldsymbol \beta = \cos \phi$. Accordingly, 
\begin{equation}
 \frac{1}{\sin \phi A(\phi)} \mathcal{C}[f](\boldsymbol x, \phi, \boldsymbol \beta)   
\end{equation}
is in the range of $M$, and thus must be in the kernel of (\ref{eq: EPD_sphere}). Indeed, it was shown in \citep{Fatma} that a generalization of this result holds for the weighted CRT when the weight is an integer power of the distance from the cone vertex.

\subsection{A symmetry of the CRT} \label{sec: symmetry}
The final detail we will need to complete our description of the range of the CRT is to observe a symmetry. Namely, suppose we apply the CRT to $f\in C_{0}^{\infty}( \R^n)$ twice:
\begin{align} \label{eq: CRT_twice}
    & \mathcal{C}[\mathcal{C}[f]](\boldsymbol x, \phi,\psi, \boldsymbol \beta,\boldsymbol \gamma) \nonumber \\
    & = \sin \phi \sin \psi \int_{\mathbb{R}^{2n}} f(\boldsymbol x - \boldsymbol y - \boldsymbol u) \delta (\boldsymbol y \cdot \boldsymbol \beta - |\boldsymbol y| \cos \phi) \delta(\boldsymbol u \cdot \gamma - \cos \psi) d\boldsymbol y d\boldsymbol u.
\end{align}
Since $\mathcal{C}[f]$ is supported on a half-space, this expression makes sense when $\psi$ and $\boldsymbol \gamma$ are near $\phi$ and $\boldsymbol \beta$ respectively (See Fig. \ref{fig: DoubleCone}). Also, the role of the inner and outer CRT are interchangeable, and we have the following identity:
\begin{equation} \label{eq: symmetry}
    \frac{\partial}{\partial \boldsymbol \beta} \mathcal{C}[\mathcal{C}[f]](\boldsymbol x, \phi,\phi, \boldsymbol \beta,\boldsymbol \gamma)|_{\boldsymbol \beta = \boldsymbol \gamma} = \frac{\partial}{\partial \boldsymbol \gamma} \mathcal{C}[\mathcal{C}[f]](\boldsymbol x, \phi,\phi, \boldsymbol \beta,\boldsymbol \gamma)|_{\boldsymbol \beta = \boldsymbol \gamma}.
\end{equation}
Here, by $\frac{\partial}{\partial \boldsymbol \beta}$ we mean the gradient with respect to $\boldsymbol \beta$.
\begin{figure}[ht!]
    \centering
    \includegraphics[width=0.6\textwidth]{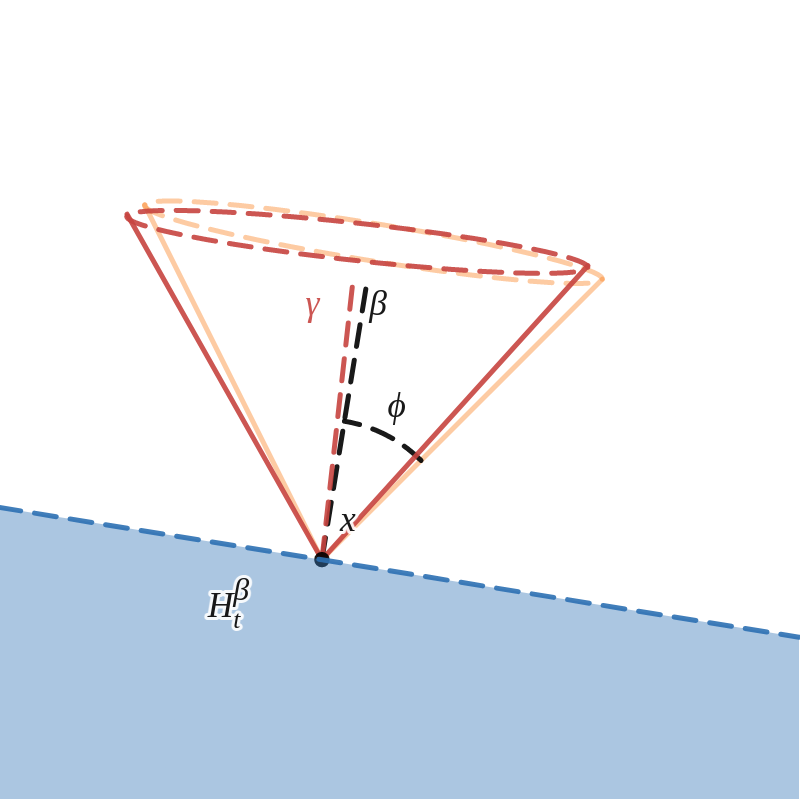}
    \caption{The CRT can be applied twice, provided the central axis direction and opening angle of the second cone are close to the first.}
    \label{fig: DoubleCone}
\end{figure}
\subsection{Range Description} \label{sec: Range_Description}
We can now obtain a characterization of the range of the full CRT.
\begin{theorem}
The function $g \in C^{\infty}(\R^n \times (0, \pi/2, \mathbb{S}^{n-1})$ can be represented as $g=\mathcal{C}[f]( \boldsymbol x, \phi, \boldsymbol \beta)$ for some $f \in C_0^{\infty}(\mathbb{R}^n)$ if and only if $g$ satisfies the following conditions:
\begin{enumerate}[(i)]
    \item For any fixed $\phi_0$, $g(\boldsymbol x, \phi_0, \boldsymbol e_n)$ satisfies the conditions of Theorems \ref{thm: even_COB} and \ref{thm: odd_COB} (depending on the parity of $n$). \label{cond: one}
    \item $\frac{1}{\sin \phi A(\phi)} g(\boldsymbol x, \phi, \boldsymbol \beta)$ is in the kernel of $\frac{\partial^2 }{\partial r^2} + \frac{A'(r)}{A(r)} \frac{\partial }{\partial r} - \Delta_{S}$ for each $\boldsymbol x$. \label{cond: two}
    \item There is a fixed $t \in \mathbb{R}$ such that $g(\boldsymbol x, \phi, \boldsymbol \beta)$ is supported in a half-space  $H_t^{\boldsymbol \beta} = \{\boldsymbol x \in \mathbb{R}^n : \boldsymbol x \cdot \boldsymbol \beta >t\}$. \label{cond: three}
    \item For all $\phi$ and $\boldsymbol \beta$, $g(\boldsymbol x, \phi, \boldsymbol \beta)$ is bounded. \label{cond: four}
    \item $\frac{\partial }{\partial \boldsymbol \beta}\mathcal{C}[g](\boldsymbol x, \phi, \phi, \boldsymbol \beta, \boldsymbol \gamma)|_{\boldsymbol \beta = \boldsymbol \gamma} =\frac{\partial }{\partial \boldsymbol \gamma}\mathcal{C}[g](\boldsymbol x, \phi, \phi, \boldsymbol \beta, \boldsymbol \gamma)|_{\boldsymbol \gamma = \boldsymbol \beta}$. \label{cond: five}
\end{enumerate}

\end{theorem}
\begin{proof}
The necessity of each of these conditions has in fact been established throughout this manuscript. Condition \ref{cond: one} was established when we considered the restricted CRT. Condition \ref{cond: two} is a consequence of the factoring of the CRT into the divergent beam and spherical mean transforms and Theorem \ref{thm: range}. Condition \ref{cond: three} was established in the case $\boldsymbol \beta = \boldsymbol e_n$, and the argument for other directions is identical.  Condition \ref{cond: four} was also established in the case $\boldsymbol \beta = \boldsymbol e_n$, and, again, the argument for other directions is identical. Condition \ref{cond: five} was established in the previous subsection. Thus we need only prove the sufficency of these conditions.

Fix $\phi_0$ and let $f\in C_{0}^{\infty}(\mathbb{R}^n)$ satisfy $g(\boldsymbol x, \phi_0,\boldsymbol e_n)=\mathcal{C}[f](\boldsymbol x, \phi_0, \boldsymbol e_n)$. The existence of $f$ is guaranteed by Theorems \ref{thm: even_COB} and \ref{thm: odd_COB}. We will show that in fact $C[f](\boldsymbol x, \phi, \boldsymbol \beta) = g(\boldsymbol x, \phi, \boldsymbol \beta)$. We first show that $\mathcal{C}[f](\boldsymbol x, \phi_0, \boldsymbol \beta)$ is the \emph{unique} solution to the boundary value problem
\begin{equation} \label{eq: Symmetry_PDE}
    \frac{\partial }{\partial \boldsymbol \beta}\mathcal{C}[u](\boldsymbol x, \phi_0, \phi_0, \boldsymbol \beta, \boldsymbol \gamma)|_{\boldsymbol \beta = \boldsymbol \gamma} =\frac{\partial }{\partial \boldsymbol \gamma}\mathcal{C}[u](\boldsymbol x, \phi_0, \phi_0, \boldsymbol \beta, \boldsymbol \gamma)|_{\boldsymbol \beta = \boldsymbol \gamma}
\end{equation}
with $u \in C^{\infty}(\R^n \times (0, \pi/2, \mathbb{S}^{n-1})$ satisfying Conditions 3 and 4 and $u(\boldsymbol x, \phi_0, \boldsymbol e_n) = g(\boldsymbol x, \phi_0, \boldsymbol e_n)$.

\begin{figure}[ht!]
    \centering
    \includegraphics[width=0.6\textwidth]{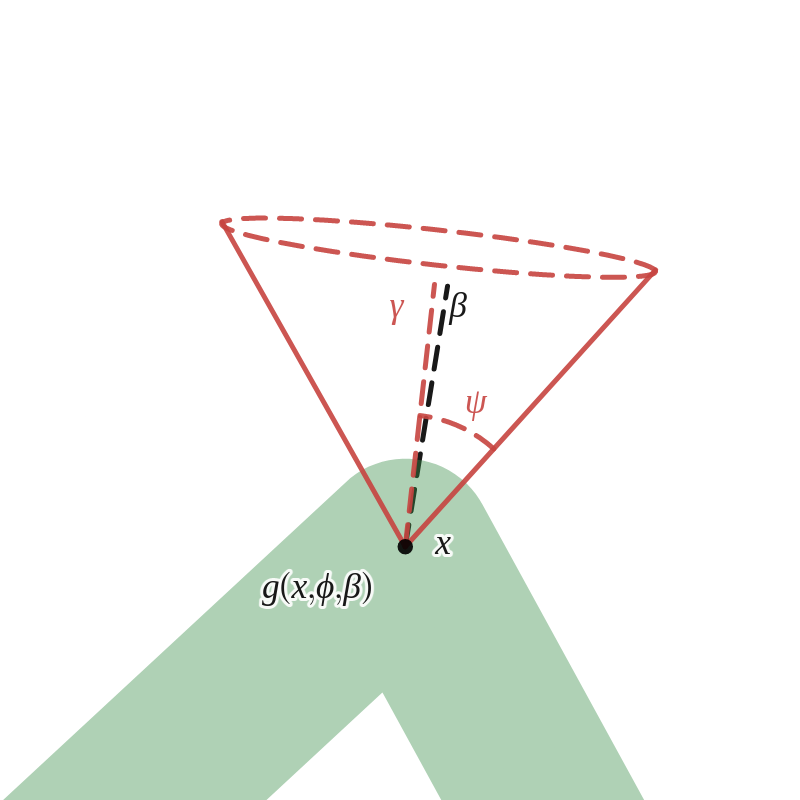}
    \caption{The CRT can be applied $g$, provided the central axis direction and opening angle of the cone are close to $\boldsymbol \beta$ and $\phi$ respectively.}
    \label{fig: DoubleConeData}
\end{figure}
Under these conditions $\mathcal{C}[u](\boldsymbol x, \phi_0, \phi_0, \boldsymbol \beta, \boldsymbol \gamma)$ will be a tempered distribution in $\boldsymbol x$. Thus the application of the Fourier transform in $\boldsymbol x$ is justified, and therefore we can compute the Fourier transform of (\ref{eq: Symmetry_PDE}) to obtain the differential equation:
\begin{equation} \label{eq: Fourier_Commutation}
    \left [ \frac{\partial}{\partial \boldsymbol \gamma} \hat{D}(\boldsymbol \omega,\phi_0,\boldsymbol \gamma) \hat{u}(\boldsymbol \omega, \phi_0, \boldsymbol \beta) \right ] _{\boldsymbol \beta = \boldsymbol \gamma}-  \left [ \hat{D}(\boldsymbol \omega,\phi_0,\boldsymbol \gamma) \frac{\partial}{\partial \boldsymbol \beta} \hat{u}(\boldsymbol \omega, \phi_0, \boldsymbol \beta) \right ]_{\boldsymbol \beta = \boldsymbol \gamma}=0
\end{equation}
where
\begin{equation}
    \hat{D}(\boldsymbol \omega, \phi_0, \boldsymbol \gamma) = p(R_{\boldsymbol \gamma} \boldsymbol \omega,\phi_0)
\end{equation}
is the Fourier transform of the cone, with
\begin{equation}
    p(\boldsymbol \omega,\phi_0) = \frac{\alpha_{n,\phi_0}i \omega_n}{(|(\omega_1,\omega_2,...,\omega_{n-1})|^2-\tan^2 \phi_0 \omega_n^2)^{n/2}}
\end{equation}
and $R_{\boldsymbol \gamma}$ is any rotation matrix such that $R_{\boldsymbol \gamma} \boldsymbol e_n = \boldsymbol \gamma.$\footnote{Due to the symmetry of the cone, any rotation mapping $\boldsymbol e_n \rightarrow \boldsymbol \gamma$ will map the cone with central axis $\boldsymbol e_n$ to the cone with central axis $\boldsymbol \gamma$. Due to the rotational invariance of the Fourier transform, its Fourier transform exhibits this same symmetry.} As discussed earlier, these distributions have analytic continuation to the complex half-space ${\Im(\boldsymbol \omega \cdot \boldsymbol \gamma) > 0}$ and are continuous at the boundary $\Im(\boldsymbol \omega \cdot \boldsymbol \gamma)=0$. Moreover, $\hat{D}$ is smooth and non-vanishing when $\Im(\boldsymbol \omega \cdot \boldsymbol \gamma) > 0$, and because (\ref{eq: Fourier_Commutation}) is linear, it admits a unique solution $\hat{u}(\boldsymbol x, \phi_0, \boldsymbol \beta)$ up to a choice of initial data $\hat{u}(\boldsymbol \omega, \phi_0 ,\boldsymbol e_n)$. In fact, the solution is given by
\begin{equation} \label{eq: Fourier_Commutation_Solution}
    \hat{u}(\boldsymbol \omega, \phi_0, \boldsymbol \beta) = \hat{U}(\boldsymbol \omega, \phi_0) \hat{D} (\boldsymbol \omega, \phi_0, \boldsymbol \beta),
\end{equation}
as can be verified by direct computation. Given the initial condition $\hat{g}(\boldsymbol \omega, \phi_0, \boldsymbol e_n)$, $\hat{U}$ is determined by
\begin{equation}
    \hat{U}(\boldsymbol \omega, \phi_0) = \frac{\hat{u}(\boldsymbol \omega, \phi_0, \boldsymbol e_n)}{\hat{D}(\boldsymbol \omega, \phi_0, \boldsymbol e_n)}=\frac{\hat{g}(\boldsymbol \omega, \phi_0, \boldsymbol e_n)}{\hat{D}(\boldsymbol \omega, \phi_0, \boldsymbol e_n)}=\frac{\widehat{\mathcal{C}[f]}(\boldsymbol \omega, \phi_0, \boldsymbol e_n)}{\hat{D}(\boldsymbol \omega, \phi_0, \boldsymbol e_n)}=\hat{f}.
\end{equation}
Furthermore, due to Condition \ref{cond: one}, $\hat{U}$ is everywhere analytic and has exponential order. Thus the solution (\ref{eq: Fourier_Commutation_Solution}) is analytic in $\Im(\boldsymbol \omega \cdot \boldsymbol \beta) > 0$. Hence, according to Conditions \ref{cond: three} and \ref{cond: four}, $\hat{u}(\boldsymbol \omega, \phi_0, \boldsymbol \beta)$ for $\boldsymbol \omega \in \R^n$ is determined by $\lim_{\Im(\boldsymbol \omega \cdot \boldsymbol \beta) \rightarrow 0}\hat{u}(\boldsymbol \omega, \phi_0, \boldsymbol \beta)$ and therefore
\begin{equation}
    \hat{u}(\boldsymbol \omega, \phi_0, \boldsymbol \beta) = \hat{f}(\boldsymbol \omega) \hat{D} (\boldsymbol \omega, \phi_0, \boldsymbol \beta) \overset{\mathcal{F}^{-1}}{\rightarrow} u(\boldsymbol x, \phi_0, \boldsymbol \beta)=\mathcal{C}[f](\boldsymbol x, \phi_0, \boldsymbol \beta).
\end{equation}
To finish the proof, we simply note that since $g(\boldsymbol x, \phi, \boldsymbol \beta)$ is in the kernel of the Euler-Poisson-Darboux operator for every point $\boldsymbol x \in \mathbb{R}^n$, by Theorems \ref{thm: injectivity} and \ref{thm: range} the map $g(\boldsymbol x, \phi_0, \boldsymbol \beta) \mapsto g(\boldsymbol x, \phi, \boldsymbol \beta)$ is bijective. Thus since $C[f](\boldsymbol x, \phi_0, \boldsymbol \beta) = g(\boldsymbol x, \phi_0, \boldsymbol \beta)$ we can conclude that $C[f](\boldsymbol x, \phi, \boldsymbol \beta) = g(\boldsymbol x, \phi, \boldsymbol \beta)$, as desired.
\end{proof}
\section{Final remarks}\label{S:concl}
In this work we determined range descriptions of the Conical Radon Transform. In the case of the restricted CRT an important connection to the d'Alembertion operator is observed, and as a result the range description depends on the parity of the dimension. Range descriptions for the restricted CRT with arbitrary opening angle and dimension are formulated. Using this description, the factorization of the CRT into the composition of divergent beam and spherical mean transforms, and symmetry of the CRT we then obtain a description of the full CRT.

Another interesting problem would be to describe the range of the transform when the vertices of the cones are restricted to a given surface, while the axes and opening angles are arbitrary. This is the natural set-up in Compton camera imaging \citep{ADHKK,bkr2020,Maxim,cargo,Fatma,TKK,X}, in which the vertices of the cones are at the detector's surface. A step toward this was made in \citep{Fatma}.

\section{Acknowledgements}\label{S:ack}

This work has been supported by the NSF grant DMS 1816430 and Texas A\&M Cyclotron Institute. The author expresses his gratitude to these agencies. Thanks also go to P.~Kuchment for posing the problem and useful comments.

\bibliography{refs}
\end{document}